\newcommand*{\concat}{\mathbin{\raisebox{0.5ex}{$\smallfrown$}}}
\title{The transfinite mean}
\author{Andre Kornell}
\address{Department of Mathematics, University of California, Davis, CA 95616}
\email{kornell@math.ucdavis.edu}
 \newtheorem{thm}{Theorem}[section]
 \newtheorem{cor}[thm]{Corollary}
 \newtheorem{lem}[thm]{Lemma}
 \newtheorem{prop}[thm]{Proposition}
 \theoremstyle{definition}
 \newtheorem{defn}[thm]{Definition}
 \theoremstyle{remark}
 \numberwithin{equation}{section}
 \theoremstyle{plain}
\newtheorem*{theorem*}{Theorem}
\newcommand{\To}{\longrightarrow}
\newcommand{\R}{\mathbb{R}}
\newcommand{\N}{\mathbb{N}}
\newcommand{\card}{\mathrm{card}\,}
\newcommand{\M}{\mathbb M}
\newcommand{\Mu}{\overline{\mathbb M}}
\newcommand{\Ml}{\underline{\mathbb M}}
\newcommand{\dom}{\mathrm{dom}}
\newcommand{\<}{\langle}
\renewcommand{\>}{\rangle}
\newcommand{\restrictto}{\!\upharpoonright}
\newcommand{\ran}{\mathrm{ran}}
\begin{document}

\maketitle

\begin{abstract}
We define a generalization of the arithmetic mean to bounded transfinite sequences of real numbers. We show that every probability space admits a transfinite sequence of points such that the measure of each measurable subset is equal to the frequency with which the sequence is in that subset. We include an argument suggested by Woodin that the club filter on $\omega_1$ does not admit such a sequence of order type $\omega_1$.
\end{abstract}

\section{Introduction}
For any probability space $(\Omega, \mathcal F, \mathbb P)$ and any event $A \in \mathcal F$, the strong law of large numbers guarantees the existence of a sequence $\<x_1, x_2, \ldots\>$ from $X$ such that
\begin{equation*}\tag{$*$}\lim_{n \To \infty}\frac 1 n \sum_{i=1}^n \chi_A(x_i) = \mathbb P(A).
\end{equation*}
Here, $\chi_A(x_i) = 1$ if $x_i \in A$, and $\chi_A(x_i) = 0$ otherwise.

We ask whether it is possible to find a sequence that satisfies this property for a given \emph{family} of events. We can always find such a sequence if the family is countable, because the probability measure is countably additive. However, if the family is very large, e.g., all of $\mathcal F$, then typically there is no such sequence. For example, if $(\Omega, \mathcal F, \mathbb P)$ is the unit interval with Lebesgue measure, then equation $(*)$ fails for each of the many measurable subsets $A\subseteq [0,1]$ of positive measure disjoint from $\{x_i\,|\,i\in \N\}$.

The main result of this paper is that it is always possible to find a \emph{transfinite} sequence $\<x_\alpha\>$ so that equation $(*)$ is true for all $A \in \mathcal F$. To state this result, it is necessary to make sense of the left side of equation $(*)$ for transfinite sequences; we do so by defining the transfinite mean $\mathbb M$. We can then establish the following:
\begin{theorem*}
There is a well-ordering $\<x_\xi : \xi \in \mathfrak c\>$ of $[0,1]$ such that for all bounded Lebesgue measurable functions $f\colon [0,1] \To \R$,
$$ \int_0^1 f(t)\, dt = \M\<f(x_\xi) : \xi \in \mathfrak c\>. $$
\end{theorem*}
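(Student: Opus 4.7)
The plan is to deduce this theorem from the main existence result announced in the abstract, applied to $([0,1], \mathcal L, \lambda)$, and then to promote the measure--frequency identity from characteristic functions to all bounded Lebesgue measurable functions by uniform approximation. Throughout, I rely on the expected elementary properties of $\M$: linearity in the coordinates, the uniform bound $|\M(a_0, a_1, \ldots_{\mathfrak c})| \leq \sup_\alpha |a_\alpha|$, and invariance under modification of the sequence on an $\M$-negligible set of indices, all of which I take to have been developed in the body of the paper.

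I first apply the existence result to obtain a transfinite sequence $(y_\beta)$ of points of $[0,1]$ along which every Lebesgue measurable $A \subseteq [0,1]$ has frequency equal to $\lambda(A)$. I then adjust $(y_\beta)$ into a bijective enumeration $(x_\alpha \mid \alpha \in \mathfrak c)$ of $[0,1]$ by splicing in the points of $[0,1]$ missing from the range and by deleting repeated occurrences, both along an $\M$-negligible sub-sequence of indices. Since these modifications preserve the frequency of every measurable set, the resulting well-ordering of $[0,1]$ still satisfies $\lambda(A) = \M(\chi_A(x_0), \chi_A(x_1), \ldots_{\mathfrak c})$ for every Lebesgue measurable $A$.

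Next I bootstrap from characteristic functions to bounded measurable functions. Linearity of the Lebesgue integral and of $\M$ yields the desired equation for every simple measurable $f$. For a general bounded measurable $f$, choose simple $f_n$ with $\|f - f_n\|_\infty \leq 1/n$; the two sides of the claim then differ from the corresponding sides for $f_n$ by at most $1/n$, using the uniform $\M$-bound applied to $f - f_n$. Letting $n \To \infty$ produces the desired equality.

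The principal difficulty lies in the first step: the set of points missing from the range of $(y_\beta)$, and the set of its duplicates, can a priori each have cardinality $\mathfrak c$ and intersect measurable sets of positive measure nontrivially, so the surgery into a bijection must be confined to an $\M$-negligible sub-sequence without introducing bias for any measurable set at all. If the construction that proves the existence result already delivers a bijective enumeration of $[0,1]$ in the nonatomic case, this step is vacuous; otherwise, the surgery requires a careful splicing argument resting on the invariance properties of $\M$ established earlier, and this is where I expect the real work of the proof to live.
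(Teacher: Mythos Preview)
There is a genuine gap in your cardinality bookkeeping. The Lebesgue $\sigma$-algebra $\mathcal L$ on $[0,1]$ has cardinality $2^{\mathfrak c}$, since every subset of a null Cantor set is Lebesgue measurable. Hence the capturing sequence furnished by the abstract existence result, applied directly to $([0,1],\mathcal L,\lambda)$, has length $2^{\mathfrak c}$, not $\mathfrak c$. No $\M$-negligible surgery can turn such a sequence into a bijection of $[0,1]$ of order type $\mathfrak c$: you would have to discard a tail of length $2^{\mathfrak c}$, which is certainly not $\M$-negligible. Even setting length aside, your post-hoc plan to ``delete repeated occurrences along an $\M$-negligible sub-sequence of indices'' is not justified: nothing in the bare existence result bounds the order type of the repeat indices, and that order type could well be the full length of the sequence.

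The paper circumvents both problems by working first with the \emph{Borel} $\sigma$-algebra, which has cardinality $\mathfrak c$, and by verifying (via the perfect-set property for Borel sets) that every Borel set of positive measure has cardinality $\mathfrak c$. These hypotheses feed into Theorem~\ref{thm: ordering}, which builds the well-ordering \emph{directly}: non-repetition is enforced during the recursive construction of Theorem~\ref{thm: capture} by choosing each new block from the complement of what has already been used, and only the residual missing points are spliced in afterward along indices that are $\M$-negligible by Theorem~\ref{thm: domination}. A separate completion step (Corollary~\ref{cor: completion}) then promotes capture of Borel sets to capture of all Lebesgue measurable sets. Your final paragraph---the bootstrap from characteristic functions to bounded measurable functions via uniform approximation by simple functions---is correct and is exactly how the paper finishes.
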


This paper is a rewritten version of the author's undergraduate senior thesis. I record here my heartfelt gratitude to Edward Nelson, my undergraduate advisor, whose mentorship was a boon during a turbulent period of my life. He suggested that I work with the upper mean $\Mu$, and this has significiantly simplified the presentation; the notation is also his. Also, I thank John H. Conway and W. Hugh Woodin. John Conway taught me the ordinal arithmetic that I use in this paper. Hugh Woodin suggested the argument in section 7 that not every probability space $(\Omega, \mathcal F, \mathbb P)$ can be captured by a transfinite sequence of length $\mathrm{card} \,\Omega$. Specifically, the club filter on $\omega_1$ cannot be so captured.

\subsection*{Transfinite sequences.}
We build almost entirely on the elementary theory of ordinals, as developed in any introductory set theory text such as Enderton's \emph{Elements of Set Theory} \cite{Enderton}*{chs. 7-8}. We will use lowercase Greek letters to denote ordinals.

Let $X$ be a set. A transfinite sequence in $X$ of length $\alpha$ is just a function from $\alpha$ to $X$. Thus, $X^\alpha$ is the set of all transfinite sequences in $X$ of length $\alpha$. We write $X^*$ for the class of all transfinite sequences in $X$, i.e., $X^* = \bigcup_{\alpha \in \mathrm{Ord}} X^\alpha$, and for each transfinite sequence $s$, we write $\ell(s) = \dom(s)$ for its length. For brevity, we will simply use the word ``sequence'' to refer to transfinite sequences.

The concatenation of a sequence $s_0$ of a length $\alpha_0$ and a sequence $s_1$ of length $\alpha_1$ is the sequence $s_0 \concat s_1$ of length $\alpha_0 + \alpha_1$ defined by $(s_0 \concat s_1)(\xi) = s_0(\xi)$ for $\xi < \alpha_0$ and $(s_0 \concat s_1) (\xi) = s_1(\xi-\alpha_0)$ for $\xi \geq \alpha_0$. Both ordinal addition and sequence concatenation generalize to infinite sequences of terms in the obvious way, forming unions at limit ordinals.

A nonzero ordinal is said to be \textit{indecomposable} iff it is not the sum of two strictly smaller ordinals, and similarly, we say that a nonempty sequence is \textit{indecomposable} iff it is not the concatenation of two strictly shorter sequences. Evidently, a sequence is indecomposable if and only if its length is indecomposable. By Cantor's normal form theorem, an ordinal is indecomposable if and only if it is of the form $\omega^\sigma$.

\section{The upper mean}
Every ordinal $\alpha$ is of the form $\alpha = \omega^\sigma n +
\rho$ for some indecomposable $\omega^\sigma$, natural $n$ and
remainder $\rho < \omega^\sigma$. Hence every sequence $s \in X^*$
can be uniquely decomposed as $s = s_0 \concat \cdots s_{n-1} \concat
\tilde s$, where the sequences $s_i$  have the same indecomposable
length $\omega^\sigma$, and $\tilde s$ is strictly shorter.

\begin{defn}
Let $X$ be a set and let $s \in X^*$. A finite decomposition $s =
s_0 \concat \cdots \concat  s_{n-1} \concat \tilde s$ is the
\emph{standard decomposition} of $s$ in case $s_0, \ldots, s_{n-1}$
are indecomposable sequences of equal length, and $\tilde s$ is
of strictly smaller length than $s_0$.
\end{defn}

We now define the upper mean $\Mu$, a class function from the class $\R^*_\sim$ of all bounded sequences of real numbers to $\R$.

\begin{defn}\label{defn: upper mean}
The class function $\Mu \colon \R^*_\sim \To \R$ is defined by the following transfinite
recursion scheme. Let $s = s_0 \concat \cdots \concat s_{n-1} \concat
\tilde s$ be the standard decomposition of a sequence $s\in
\R^*_\sim$. Then
\begin{enumerate}
        \item $\Mu(s) = s$ whenever $\ell(s)=1$,
        \item $\displaystyle \Mu(s) = \limsup_{\xi \To \ell(s)} \Mu(s\restrictto\xi)$
        whenever $s$ is indecomposable with $\ell(s) > 1$,
        and
        \item $\Mu(s) = \frac 1 n (\Mu(s_0) + \cdots + \Mu(s_{n-1}))$
        whenever $s$ is decomposable.
\end{enumerate}
Here, $s\restrictto \xi$ denotes the initial segment of $s$ of length $\xi$.
\end{defn}

The following two properties of $\Mu$ follow
easily by transfinite induction.

\begin{prop}\label{prop: bounded} For all $r, s \in \R^*_\sim$, we have $\displaystyle \inf_{\xi<\ell(r)}r(\xi) \leq
\Mu(r) \leq \Mu(s) \leq \sup_{\xi < \ell(s)}s(\xi)$ whenever $\ell(r) = \ell(s)$ and $r
\leq s$ pointwise.
\end{prop}

\begin{prop}\label{prop: subadditivity}
 For all $r, s \in \R^*_\sim$ of equal length, $\Mu(r +
s) \leq \Mu(r) + \Mu(s)$.
\end{prop}

\begin{thm}\label{thm: domination}
Let $r, s \in \R^*_\sim$ be such that $\ell(r) + \ell(s) = \ell(s)$.
Then $\Mu(r \concat  s) = \Mu(s)$.
\end{thm}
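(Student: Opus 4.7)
The plan is transfinite induction on $\ell(s)$. First observe that $\ell(r)+\ell(s)=\ell(s)$ is equivalent to $\ell(r)<\omega^\sigma$, where $\omega^\sigma$ is the leading indecomposable in the Cantor normal form of $\ell(s)$, since $\alpha+\omega^\sigma=\omega^\sigma$ precisely when $\alpha<\omega^\sigma$. The base case $\ell(s)=1$ is vacuous, so assume $\ell(s)>1$.

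Suppose first that $s$ is decomposable with standard decomposition $s=s_0\oplus\cdots\oplus s_{n-1}\oplus\tilde s$, each $s_i$ of length $\omega^\sigma$. Since $\ell(r)+\omega^\sigma=\omega^\sigma$, the sequence $r\oplus s_0$ has length $\omega^\sigma$, and one checks that $(r\oplus s_0)\oplus s_1\oplus\cdots\oplus s_{n-1}\oplus\tilde s$ is the standard decomposition of $r\oplus s$. Applying clause (3) of Definition~\ref{defn: upper mean} to both $\Mu(s)$ and $\Mu(r\oplus s)$, the desired equality reduces to $\Mu(r\oplus s_0)=\Mu(s_0)$, which is the indecomposable case with $\ell(s_0)<\ell(s)$ and is supplied by the inductive hypothesis.

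Now suppose $s$ is indecomposable of length $\omega^\sigma>1$. If there exists an indecomposable $\omega^\tau$ with $\ell(r)<\omega^\tau<\omega^\sigma$, then for $\xi\in[\omega^\tau,\omega^\sigma)$ we have $\ell(r)+\xi=\xi$, so $(r\oplus s)|_\xi=r\oplus s|_\xi$, and the inductive hypothesis gives $\Mu((r\oplus s)|_\xi)=\Mu(s|_\xi)$ on a tail in $\omega^\sigma$; clause (2) of Definition~\ref{defn: upper mean} then yields $\Mu(r\oplus s)=\Mu(s)$. Otherwise $\sigma=\sigma_0+1$ and $\omega^{\sigma_0}\le\ell(r)<\omega^{\sigma_0+1}$; this is the main obstacle, since no tail of $\xi$'s below $\omega^\sigma$ absorbs $\ell(r)$, and the naive inductive hypothesis does not equate $\Mu((r\oplus s)|_\xi)$ with $\Mu(s|_\xi)$ at cofinally many $\xi$.

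In this delicate subcase, write the standard decomposition $r=r_0\oplus\cdots\oplus r_{m-1}\oplus\tilde r$ with each $r_i$ of length $\omega^{\sigma_0}$ and $\ell(\tilde r)=\rho<\omega^{\sigma_0}$, and cut $s$ into blocks $s_0,s_1,\ldots$ of length $\omega^{\sigma_0}$. For $\xi'\in[\omega^{\sigma_0}k,\omega^{\sigma_0}(k+1))$ with $k\ge 1$ and $\xi=\ell(r)+\xi'$, clause (3) gives $\Mu(s|_{\xi'})=\bar S_k:=\tfrac{1}{k}\sum_{i<k}\Mu(s_i)$, while the standard decomposition of $(r\oplus s)|_\xi=r\oplus s|_{\xi'}$ places $r_0,\ldots,r_{m-1}$ as the first $m$ blocks, then bunches $\tilde r$ together with $s_0$ to form the $(m+1)$th block of length $\omega^{\sigma_0}$, and finally uses $s_1,\ldots,s_{k-1}$ as further blocks. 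Applying the inductive hypothesis $\Mu(\tilde r\oplus s_0)=\Mu(s_0)$ yields
\[
\Mu((r\oplus s)|_\xi)=\frac{m\bar R+k\bar S_k}{m+k},\qquad \bar R:=\tfrac{1}{m}\sum_{i<m}\Mu(r_i).
\]
By boundedness of $\bar S_k$ (Corollary~\ref{cor: bounded}), the difference $\tfrac{m(\bar R-\bar S_k)}{m+k}$ tends to $0$ as $k\to\omega$, so both $\limsup$s reduce to $\limsup_k\bar S_k$, giving $\Mu(r\oplus s)=\Mu(s)$.
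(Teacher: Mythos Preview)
Your proof is correct and follows essentially the same route as the paper's: transfinite induction on $\ell(s)$, reducing the decomposable case to the indecomposable one via the standard decomposition, and then splitting the indecomposable case $\ell(s)=\omega^\sigma$ into the ``absorbing'' subcase (your existence of an intermediate $\omega^\tau$, the paper's $\tau+1<\sigma$) and the successor subcase $\sigma=\sigma_0+1$, where both you and the paper chop $r$ and $s$ into $\omega^{\sigma_0}$-blocks and finish by a Ces\`aro-type averaging argument. Your write-up is in fact somewhat more explicit than the paper's---e.g.\ your formula $\Mu((r\oplus s)|_\xi)=\frac{m\bar R+k\bar S_k}{m+k}$ and your clean identification $(r\oplus s)|_\xi=r\oplus s|_\xi$ once $\ell(r)+\xi=\xi$---but the structure and key ideas coincide.
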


\begin{proof}
Proof is by transfinite induction of $\ell(s)$.

If $s$ is decomposable, let $s = s_0 \concat \cdots \concat s_{n-1}
\concat \tilde s$ be the standard decomposition of $s$.  Since the conditions $\ell(r) + \ell(s) = \ell(s)$ and $\ell(r) +
\ell(s_0) = \ell(s_0)$ are equivalent, we have by the induction
hypothesis that
 \begin{align*} \Mu( r \concat
s) & =
    \Mu( (r \concat s_0) \concat s_1 \concat \cdots \concat s_n)
    \\ & =\frac 1 n (\Mu(r \concat s_0) + \Mu(s_1) + \cdots +
    \Mu(s_{n-1}))
    \\ & = \frac 1 n (\Mu(s_0) + \Mu(s_1) +\cdots +\Mu(s_{n-1}))
    = \Mu(s)
\end{align*}
Note that $s_0$ is necessarily shorter than $s$ since $s$ is
decomposable.

If $s$ is indecomposable, then $\ell(s) = \omega^\sigma$ for some ordinal
$\sigma$. Let $r = r_0 \concat \cdots \concat r_{m-1} \concat \tilde r$
be the standard decomposition of $r$. Since $r_0$ is indecomposable,
it has length $\omega^\tau$ for some $\tau < \sigma$. If $\tau +1 <
\sigma$, then we have
\begin{align*}
\Mu(r \concat s)
    & = \limsup_{\xi \To \omega^\sigma} \Mu((r \concat s)\restrictto\xi)
    = \limsup_{\xi \To \omega^\sigma} \Mu( r \concat (s \restrictto \xi))
    = \limsup_{\xi \To \omega^\sigma} \Mu(s|_\xi)
    = \Mu (s),
\end{align*}
because for sufficiently large $\xi < \omega^\sigma$, $\ell(r) + \xi
= \xi$. Otherwise, $\omega^\sigma = \omega^{\tau +1} = \omega^\tau
\omega = \omega^\tau + \omega^\tau + \cdots$. It follows that
$s = s_0 \concat s_1 \concat \cdots$ for some $s_i$, each of
length $\omega^\tau$. Then,
\begin{align*}
\Mu(r \concat s)
    & = \limsup_{\xi \To \omega^\sigma} \Mu((r \concat s) \restrictto \xi)\
    \\ & = \limsup_{k \To \omega} \Mu(r_0 \concat \cdots \concat
    r_{m-1} \concat (\tilde r  \concat s_0) \concat s_1 \concat \cdots \concat
    s_{k-1})
    \\ & = \limsup_{k \To \omega} \frac 1 {n+k}
            (\Mu(r_0) + \cdots +\Mu(r_{n-1}) + \Mu(\tilde r \concat s_0) +
            \Mu(s_1) + \cdots + \Mu(s_{k-1}))
    \\ & = \limsup_{k \To \omega} \frac 1 {n+k}
            (\Mu(r_0) + \cdots +\Mu(r_{n-1}) + \Mu( s_0) +
            \Mu(s_1) + \cdots + \Mu(s_{k-1}))
    \\ & = \limsup_{k \To \omega} \frac 1 {k}
            ( \Mu(s_0) +
            \Mu(s_1) + \cdots + \Mu(s_{k-1}))
         =  \limsup_{\xi \To \omega^\sigma} \Mu(s \restrictto \xi)
         =  \Mu(s),
\end{align*}
where the fifth equality follows by elementary analysis since the
sequence $\Mu(s_0), \Mu(s_1), \ldots$ is bounded.
\end{proof}

\begin{cor}
Let $s$ be indecomposable with $\displaystyle \lim_{\xi \rightarrow \ell(s)} s (\xi) = c$. Then, $\Mu(s) = c$.
\end{cor}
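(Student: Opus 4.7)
The plan is to apply Theorem \ref{thm: domination} to absorb an initial segment of $s$ on which the terms may still be far from $c$, and then use the pointwise bound of Corollary \ref{cor: bounded} on the tail.

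If $\ell(s) = 1$ the claim is immediate from clause (1) of Definition \ref{defn: upper mean}, so I would assume $\ell(s) = \omega^\sigma$ with $\sigma \geq 1$. Fix $\epsilon > 0$. By the convergence hypothesis, choose $\xi_0$ with $1 \leq \xi_0 < \omega^\sigma$ such that $|s_{(\eta)} - c| < \epsilon$ whenever $\xi_0 \leq \eta < \omega^\sigma$, and write $s = r \oplus u$ with $r = s|_{\xi_0}$. Because $\omega^\sigma$ is indecomposable and $\xi_0 < \omega^\sigma$, the unique $\delta$ satisfying $\xi_0 + \delta = \omega^\sigma$ is $\delta = \omega^\sigma$ itself. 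Hence $\ell(r) + \ell(u) = \xi_0 + \omega^\sigma = \omega^\sigma = \ell(u)$, which is precisely the hypothesis of Theorem \ref{thm: domination}, and so $\Mu(s) = \Mu(u)$.

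Every entry of $u$ lies in $[c - \epsilon, c + \epsilon]$, so Corollary \ref{cor: bounded}, applied with both arguments equal to $u$, yields $c - \epsilon \leq \Mu(u) \leq c + \epsilon$. Combining these two observations gives $|\Mu(s) - c| \leq \epsilon$, and since $\epsilon > 0$ was arbitrary, $\Mu(s) = c$.

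The only place where any thought is required is the verification that the tail $u$ has length exactly $\omega^\sigma$, and this is where the indecomposability of $\ell(s)$ is indispensable: for a general length, the ordinal complementing $\xi_0$ could be strictly smaller than $\ell(s)$, the domination theorem would not apply, and the initial segment $r$ could contribute nontrivially to $\Mu(s)$. Everything else is a direct appeal to the two cited results, with no transfinite induction needed.
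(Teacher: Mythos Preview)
Your proof is correct and is essentially the intended argument: the paper states this as an unproved corollary immediately after Theorem~\ref{thm: domination}, so the expected derivation is exactly your two-step ``chop off an initial segment via domination, then bound the tail via Corollary~\ref{cor: bounded}'' argument. The care you take in verifying that the tail $u$ again has length $\omega^\sigma$ (so that the hypothesis $\ell(r)+\ell(u)=\ell(u)$ of the domination theorem is met) is precisely the point.
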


\begin{cor} We have $\Mu(s_0 \concat \cdots \concat s_{n-1}) = \frac 1 n
(\Mu(s_0) + \cdots + \Mu(s_{n-1}))$ whenever the sequences $s_i \in
\R^*_\sim$ have equal length.
\end{cor}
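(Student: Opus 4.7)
The plan is to compute the standard decomposition of $s := s_0 \oplus \cdots \oplus s_{n-1}$, compare it with the standard decompositions of the individual $s_i$, and reduce to clause (3) of the recursive definition of $\Mu$. Writing the common length $\beta := \ell(s_i)$ in the form $\beta = \omega^\sigma m + \rho$ with $\rho < \omega^\sigma$, each $s_i$ has standard decomposition $s_i = s_i^0 \oplus \cdots \oplus s_i^{m-1} \oplus \tilde s_i$ with $\ell(s_i^j) = \omega^\sigma$ and $\ell(\tilde s_i) = \rho$. Using $\rho + \omega^\sigma = \omega^\sigma$ repeatedly, a brief induction on $n$ gives $\ell(s) = \omega^\sigma(nm) + \rho$, so $s$ has a standard decomposition into $nm$ indecomposable pieces of length $\omega^\sigma$ followed by a remainder of length $\rho$.

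The delicate step, which I expect to be the main obstacle, is identifying these $nm$ pieces, since the remainders $\tilde s_i$ get absorbed into subsequent indecomposable pieces of the aggregated decomposition. Tracking $s$-positions, the first $m$ pieces are $s_0^0, \ldots, s_0^{m-1}$; for each $i \geq 1$ the piece at index $im$ begins at $s$-position $\omega^\sigma(im) = (i-1)\beta + \omega^\sigma m$ (which lies inside $s_{i-1}$ at the start of $\tilde s_{i-1}$), consumes $\tilde s_{i-1}$ across the boundary into $s_i$, and ends at $s$-position $\omega^\sigma(im+1)$ just after the initial $s_i^0$; so this piece equals $\tilde s_{i-1} \oplus s_i^0$. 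An analogous calculation with no boundary to cross shows that for $i \geq 1$ and $1 \leq j \leq m-1$ the piece at index $im + j$ equals $s_i^j$. The final remainder of length $\rho$ is $\tilde s_{n-1}$. This is purely mechanical ordinal bookkeeping, but it requires care.

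Theorem \ref{thm: domination} then neutralizes the absorption: since $\ell(\tilde s_{i-1}) + \ell(s_i^0) = \rho + \omega^\sigma = \omega^\sigma = \ell(s_i^0)$, we have $\Mu(\tilde s_{i-1} \oplus s_i^0) = \Mu(s_i^0)$. Applying clause (3) of the definition of $\Mu$ to $s$ and to each $s_i$ then yields
$$ \Mu(s) \;=\; \frac{1}{nm}\sum_{i=0}^{n-1}\sum_{j=0}^{m-1} \Mu(s_i^j) \;=\; \frac{1}{n}\sum_{i=0}^{n-1}\Mu(s_i). $$
In the degenerate case where $\beta$ is itself indecomposable (so $m=1$ and $\rho=0$), the $\tilde s_i$ are empty, no absorption occurs, and the identity follows immediately from clause (3) applied to the standard decomposition $s = s_0 \oplus \cdots \oplus s_{n-1}$.
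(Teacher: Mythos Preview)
Your proposal is correct and is precisely the argument the paper leaves implicit: the corollary is stated without proof immediately after Theorem~\ref{thm: domination}, and the intended derivation is exactly to compute the standard decomposition of $s_0\oplus\cdots\oplus s_{n-1}$, observe that the boundary pieces are $\tilde s_{i-1}\oplus s_i^0$, and invoke Theorem~\ref{thm: domination} (with $\ell(\tilde s_{i-1})+\ell(s_i^0)=\rho+\omega^\sigma=\omega^\sigma$) to reduce to clause~(3) of Definition~\ref{defn: upper mean}. Your ordinal bookkeeping is accurate, including the length computation $\ell(s)=\omega^\sigma(nm)+\rho$ and the identification of the $nm$ indecomposable pieces.
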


\begin{cor}
For all $r, s \in \R^*_\sim$, we have $\Mu(r \concat s) = \Mu(s \concat r)$.
\end{cor}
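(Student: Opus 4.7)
The plan is to reduce this to Theorem \ref{thm: domination} by a case analysis on how the standard decompositions of $r$ and $s$ align. I would begin by writing the standard decompositions $r = r_0 \oplus \cdots \oplus r_{m-1} \oplus \tilde r$ and $s = s_0 \oplus \cdots \oplus s_{n-1} \oplus \tilde s$, with the $r_i$ of common indecomposable length $\omega^\sigma$ and the $s_j$ of common indecomposable length $\omega^\tau$. The argument then branches on whether $\sigma$ and $\tau$ coincide, and in each branch I will use the fact that $\omega^\sigma$ is closed under addition of strictly smaller ordinals, which is equivalent to its indecomposability.

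If $\sigma < \tau$, then $\ell(r) < \omega^{\sigma+1} \leq \omega^\tau$, so $\ell(r) + \ell(s) = \ell(s)$, and Theorem \ref{thm: domination} gives $\Mu(r \oplus s) = \Mu(s)$ directly. For the reverse order, $\ell(\tilde s)$ and $\ell(r)$ both lie below the indecomposable $\omega^\tau$, so the standard decomposition of $s \oplus r$ is $s_0 \oplus \cdots \oplus s_{n-1} \oplus (\tilde s \oplus r)$, and Definition \ref{defn: upper mean}(3) yields $\Mu(s \oplus r) = \frac{1}{n}\sum_{j=0}^{n-1}\Mu(s_j) = \Mu(s)$. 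The subcase $\sigma > \tau$ is symmetric, with both sides computing to $\Mu(r)$.

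If $\sigma = \tau$, I would check that the standard decomposition of $r \oplus s$ is
$$r_0 \oplus \cdots \oplus r_{m-1} \oplus (\tilde r \oplus s_0) \oplus s_1 \oplus \cdots \oplus s_{n-1} \oplus \tilde s,$$
because the junction block has length $\ell(\tilde r) + \omega^\sigma = \omega^\sigma$ and the tail $\tilde s$ has length less than $\omega^\sigma$. Using Theorem \ref{thm: domination} to replace $\Mu(\tilde r \oplus s_0)$ by $\Mu(s_0)$, Definition \ref{defn: upper mean}(3) then gives
$$\Mu(r \oplus s) = \frac{1}{m+n}\left(\sum_{i=0}^{m-1}\Mu(r_i) + \sum_{j=0}^{n-1}\Mu(s_j)\right),$$
and the symmetric calculation for $s \oplus r$, absorbing $\tilde s$ into $r_0$, produces the same right-hand side.

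There is no deep obstacle: the creative step is recognizing that the desired symmetry is already latent in Theorem \ref{thm: domination}, and the rest is careful bookkeeping around standard decompositions, controlled throughout by the indecomposability of $\omega^\sigma$.
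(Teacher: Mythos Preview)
Your argument is correct and is exactly the intended derivation: the paper states this result as an unproved corollary of Theorem~\ref{thm: domination}, and your case analysis on $\sigma$ versus $\tau$---absorbing the remainder $\tilde r$ (or $\tilde s$) into the adjacent indecomposable block via Theorem~\ref{thm: domination}---is precisely how one unpacks that. The same absorption trick reappears in the paper's proof of Theorem~\ref{thm: axioms}, confirming that this is the route the author had in mind.
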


\begin{thm}\label{thm: axioms}
The function class function $\Mu$ is uniquely defined by the following
properties:
\begin{enumerate}
        \item $\Mu(c) = c$ whenever $c \in \R = \R_\sim^1$.
        \item $\Mu(s \concat r) = \Mu(r \concat s)$
        \item $\Mu(s_0 \concat \cdots \concat s_{n-1}) = \frac 1 n (\Mu(s_0)
        + \cdots + \Mu(s_{n-1}))$ whenever the $s_i$ have equal
        length.
        \item $\displaystyle \Mu(s) = \limsup_{\xi \To \ell(s)} \Mu(s \restrictto \xi)$
        whenever $s$ is indecomposable with length greater than $1$.
\end{enumerate}
\end{thm}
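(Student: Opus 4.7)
The plan is first to check that the recursively defined $\Mu$ of Definition \ref{defn: upper mean} already satisfies the four listed properties --- (1) and (4) hold immediately by the recursion clauses, while (3) and (2) are the two corollaries stated right after Theorem \ref{thm: domination} --- and then to establish uniqueness by transfinite induction on $\ell(s)$. So I let $\Mu'$ be any function on $\R^*_\sim$ satisfying (1)--(4) and aim to prove $\Mu'(s) = \Mu(s)$ for all $s$.

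The induction splits on the shape of the standard decomposition of $s$. If $\ell(s) = 1$, so that $s$ is a real number $c$, property (1) yields $\Mu'(s) = c = \Mu(s)$. If $\ell(s)$ is indecomposable and greater than $1$, property (4) gives $\Mu'(s) = \limsup_{\xi \to \ell(s)} \Mu'(s|_\xi)$; every $s|_\xi$ with $1 \le \xi < \ell(s)$ is strictly shorter than $s$, so the induction hypothesis together with (4) for $\Mu$ reduces this to $\limsup_{\xi \to \ell(s)} \Mu(s|_\xi) = \Mu(s)$.

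The substantive case is when $s$ is decomposable with a nonempty remainder in its standard decomposition $s = s_0 \oplus \cdots \oplus s_{n-1} \oplus \tilde s$, where each $s_i$ has length $\omega^\sigma$ and $0 < \ell(\tilde s) < \omega^\sigma$. Property (3) cannot be applied directly, since the pieces $s_0,\dots,s_{n-1},\tilde s$ do not all share a length. The key move is to use the commutation axiom (2) to reshuffle,
\[\Mu'(s) = \Mu'(\tilde s \oplus s_0 \oplus s_1 \oplus \cdots \oplus s_{n-1}),\]
and then invoke indecomposability of $\omega^\sigma$ to conclude that $\ell(\tilde s) + \omega^\sigma = \omega^\sigma$, so $\tilde s \oplus s_0$ has length $\omega^\sigma$. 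Regrouping, the right-hand side becomes a concatenation of $n$ blocks of equal length $\omega^\sigma < \ell(s)$; applying (3), then the induction hypothesis on each block, and finally Theorem \ref{thm: domination} (already established for $\Mu$) to absorb $\tilde s$ into $s_0$, I arrive at $\frac{1}{n}(\Mu(s_0) + \cdots + \Mu(s_{n-1})) = \Mu(s)$ by the recursive clause of Definition \ref{defn: upper mean}.

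The main obstacle is exactly this remainder case, which is the one place where axiom (2) is genuinely needed. The other cases --- the base case, the indecomposable case, and the empty-remainder subcase $\ell(\tilde s) = 0$ in which (3) applies directly --- fall out immediately from the corresponding axioms combined with the induction hypothesis.
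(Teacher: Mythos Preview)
Your proof is correct and follows essentially the same route as the paper: verify that $\Mu$ satisfies (1)--(4) via the definition and the corollaries after Theorem \ref{thm: domination}, then for uniqueness run transfinite induction on $\ell(s)$, with the key decomposable step being to apply (2) to cycle $\tilde s$ to the front and then (3) to the $n$ equal-length blocks $(\tilde s \oplus s_0), s_1, \ldots, s_{n-1}$. The only cosmetic difference is that the paper stops once it has the recursion $\Mu(s) = \frac{1}{n}(\Mu(\tilde s \oplus s_0) + \cdots + \Mu(s_{n-1}))$ valid for \emph{any} function satisfying (1)--(4) and concludes uniqueness directly, whereas you additionally invoke Theorem \ref{thm: domination} to rewrite $\Mu(\tilde s \oplus s_0)$ as $\Mu(s_0)$ and match the defining clause of $\Mu$; this extra step is harmless but not needed.
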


\begin{proof}
We have already shown that $\Mu$ has properties (1) - (4). The uniqueness of $\Mu$ follows by transfinite induction on the length of the input sequence $s \in \R^*_\sim$. If $s$ has length $1$, then $\Mu(s)$ is uniquely determined by property (1). If $s$ has length greater than $1$, then $\Mu(s)$ is uniquely determined by the values of $\Mu$ on sequences strictly shorter than $s$: if $s$ is decomposable with standard decomposition $s = s_0 \concat \cdots \concat s_{n-1} \concat \tilde s$, then $\Mu(s) = \Mu(\tilde s \concat s_0  \concat s_1 \concat \cdots \concat s_{n-1}) = \frac 1 n (\Mu(\tilde s \concat s_0) + \Mu(s_1) + \cdots + \Mu(s_{n-1}))$ by properties (2) and (3), and if $s$ is indecomposable, then $\displaystyle \Mu(s) = \limsup_{\xi \To \ell(s)} \Mu(s \restrictto \xi)$ by property (4).
\end{proof}

\begin{defn}
Let $s \in \R^*_\sim$ be a sequence whose length is divisible by $\beta$ in the sense that for some $\alpha$, $\ell(s) = \beta \alpha$. We define $s/\beta$ to be the sequence of length $\alpha$ that results from replacing each segment of $s$ of length $\beta$ with the value of $\overline{\mathbb M}$ on that segment. In other words, if
\[ s = s_0 \concat s_1 \concat s_2 \concat \cdots \concat s_\xi \concat \cdots \quad (\xi < \alpha)\]
and each $s_\xi$ has length $\beta$, then
$$ s / \beta = \< \overline{\mathbb M} (s_0) \> \concat \< \overline{\mathbb M}(s_1)\> \concat \< \overline{\mathbb M}(s_2) \> \concat \cdots \concat \< \Mu(s_\xi) \> \concat \cdots \quad (\xi < \alpha).$$ If $s$ is bounded, then by Proposition \ref{prop: bounded} so is $s/\beta$. Thus, $s/ \beta \in \R^*_\sim$.
\end{defn}

If $\beta$ does not divide the length of $s$, then
we leave $s/\beta$ undefined. Note that more generally if bounded sequences $s_\xi$, for $\xi <\alpha$, all have lengths divisible by $\beta$, then $(s_0 \concat s_1 \concat \cdots \concat s_\xi \concat \cdots)/\beta = (s_0/\beta) \concat (s_1 /\beta) \concat \cdots \concat (s_\xi / \beta) \concat \cdots $.

\begin{thm}\label{thm: active}
For all $s \in \R^*_\sim$, if $\alpha$ and $\beta$ are ordinals such that $\beta \alpha$ divides $\ell(s)$, then $(s/\beta)/\alpha = s/(\beta\alpha)$.
\end{thm}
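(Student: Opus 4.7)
My strategy is to reduce the theorem to a single-block statement using the observation just above, and then prove the resulting lemma by transfinite induction on the ``quotient length'' $\alpha$. Writing $\ell(s) = \beta\alpha\gamma$ and decomposing $s = \bigoplus_{\iota \in \gamma} s_\iota$ with each $\ell(s_\iota) = \beta\alpha$, two applications of the preceding observation collapse the two sides of the desired equality to $(s/\beta)/\alpha = \bigoplus_{\iota \in \gamma}\Mu(s_\iota/\beta)$ and $s/(\beta\alpha) = \bigoplus_{\iota \in \gamma}\Mu(s_\iota)$. Hence the theorem reduces termwise to the key lemma: if $\ell(t) = \beta\alpha$, then $\Mu(t/\beta) = \Mu(t)$.

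I prove this lemma by transfinite induction on $\alpha$ (with $\beta$ fixed). The base case $\alpha = 1$ is immediate since $t/\beta$ is then the length-one sequence $\Mu(t)$. Suppose $\alpha > 1$ is decomposable with standard decomposition $\omega^\sigma n + \rho$. Since $\beta\omega^\sigma = \omega^{\tau+\sigma}$ (where $\omega^\tau$ is the leading term of $\beta$) is indecomposable and strictly larger than $\beta\rho$, the standard decomposition of $t$ takes the form $t_0 \oplus \cdots \oplus t_{n-1} \oplus \tilde t$ with $\ell(t_i) = \beta\omega^\sigma$ and $\ell(\tilde t) = \beta\rho$; likewise the standard decomposition of $t/\beta$ is $(t_0/\beta) \oplus \cdots \oplus (t_{n-1}/\beta) \oplus (\tilde t/\beta)$. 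Clause~(3) of Definition~\ref{defn: upper mean} applied to both then yields $\Mu(t) = \tfrac{1}{n}\sum_{i<n}\Mu(t_i)$ and $\Mu(t/\beta) = \tfrac{1}{n}\sum_{i<n}\Mu(t_i/\beta)$, and the induction hypothesis applied to each $t_i$ (whose quotient length $\omega^\sigma$ is strictly less than $\alpha$) closes this case.

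The main obstacle is the indecomposable case $\alpha = \omega^\sigma$ with $\sigma \geq 1$. Here $\ell(t) = \beta\omega^\sigma = \omega^{\tau+\sigma}$ is also indecomposable, so both $\Mu(t)$ and $\Mu(t/\beta)$ are defined as $\limsup$s of prefix means. My plan is to match the two limsups along the cofinal set $\{\iota : \omega \leq \iota < \alpha\}$: for such an $\iota$ and any $\delta < \beta$, the leading exponent of $\beta\iota$ is at least $\tau+1$, which strictly exceeds the leading exponent of $\delta$, so $\delta + \beta\iota = \beta\iota$. Theorem~\ref{thm: domination} together with the commutativity corollary then forces $\Mu(t|_{\beta\iota+\delta}) = \Mu(t|_{\beta\iota})$ for all $\delta < \beta$, so $\limsup_{\xi \to \beta\alpha}\Mu(t|_\xi)$ collapses onto $\limsup_{\iota \to \alpha}\Mu(t|_{\beta\iota})$. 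The induction hypothesis applied to each prefix $t|_{\beta\iota}$ (quotient length $\iota < \alpha$) rewrites $\Mu(t|_{\beta\iota})$ as $\Mu((t|_{\beta\iota})/\beta) = \Mu((t/\beta)|_\iota)$, whose limsup is exactly $\Mu(t/\beta)$. The delicate bookkeeping is the tail-absorption identity $\delta + \beta\iota = \beta\iota$ and its uniform use to identify the two limsups, taken a~priori over different cofinal sets.
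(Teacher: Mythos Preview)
Your reduction to the single-block lemma and the overall induction on $\alpha$ are sound, and for $\alpha$ indecomposable of the form $\omega^\sigma$ with $\sigma \geq 2$ your tail-absorption argument via Theorem~\ref{thm: domination} and the commutativity corollary is correct and clean. But there is a genuine gap at $\alpha = \omega$ (the case $\sigma = 1$): your cofinal set $\{\iota : \omega \leq \iota < \alpha\}$ is then \emph{empty}, so the matching of the two $\limsup$s never starts. You cannot simply extend the argument to finite $\iota$, because for such $\iota$ the leading exponent of $\beta\iota$ is exactly $\tau$, not $\tau+1$; when $\beta$ is decomposable one can take $\delta$ with $\omega^\tau \leq \delta < \beta$, and then $\delta + \beta\iota \neq \beta\iota$. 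For instance, with $\beta = \omega+1$ and $\delta = \omega$ one has $\delta + \beta\cdot 2 = \omega\cdot 3 + 1 \neq \omega\cdot 2 + 1 = \beta\cdot 2$, so the domination theorem does not apply and $\Mu(t|_{\beta\iota+\delta})$ need not equal $\Mu(t|_{\beta\iota})$.

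This case genuinely requires an analytic input that does not follow from domination and commutativity alone: one must know that the Ces\`aro $\limsup$ of a bounded real sequence is unchanged by first averaging in consecutive blocks of a fixed finite size $m$. This is precisely the fact the paper isolates as ``$r/m/\omega = r/\omega$ for all $r \in \R^\omega_\sim$ by elementary analysis,'' and it is the one place where real-analytic reasoning (rather than pure ordinal bookkeeping) enters the proof. A smaller issue: in your decomposable-$\alpha$ step the identity $\beta\omega^\sigma = \omega^{\tau+\sigma}$ is valid only for $\sigma \geq 1$, so for finite $\alpha$ with decomposable $\beta$ the $\beta$-blocks are not the standard decomposition of $t$; this is easily patched by invoking the equal-length averaging corollary rather than clause~(3) of Definition~\ref{defn: upper mean} directly.
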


\begin{proof}
Note that this equality holds for all finite $\alpha$ and indecomposable $\beta$, simply by definition of $\Mu$. The proof of the general case is by transfinite induction on $\alpha$. Fix a sequence $s \in \R_\sim^*$ and ordinals $\alpha$ and $\beta$ such that $\beta\alpha$ divides $\ell(s)$. Assume that for all sequences $s' \in \R_\sim^*$ and all ordinals $\alpha'$ and $\beta'$ such that $\beta' \alpha'$ divides $\ell(s')$, if $\alpha' < \alpha$, then $(s'/\beta')/\alpha' = s'/(\beta'\alpha')$.

Without loss of generality, we may assume that $\ell(s)$ is equal to $\beta \alpha$, since this case easily implies the general case in which $\ell(s)$ is merely divisible by $\beta\alpha$. In this case, $(s/\beta)/\alpha = \< \Mu(s/\beta)\>$ and $s/\beta\alpha = \< \Mu(s)\>$. We may also assume without loss of generality that $\alpha$ is of the form $\alpha = \omega^\sigma n$, as otherwise we may establish the equality $\Mu(s/\beta) = \Mu(s)$ by neglecting remainder segments, as in Definition \ref{defn: upper mean} of $\Mu$. In particular, $\alpha$ is a limit ordinal. Hence, we may likewise assume without loss of generality that $\beta$ is of the form $\beta = \omega^\tau m$. Otherwise, it is of the form $\beta = \omega^\tau m + \rho$ for $0 < \rho < \omega^\tau$, and we may show that $s/\beta = s/\omega^\tau m$ by appealing to Lemma \ref{defn: upper mean} and to Theorem \ref{thm: domination} at each entry. This equality yields $s/\beta/\alpha = s/(\omega^\tau m)/\alpha = s/(\omega^\tau m \alpha) = s/(\beta\alpha)$.

If $\sigma = 0$, then $\alpha=n$ and $s$ is the concatenation of $mn$ bounded sequences of indecomposable length, so $s/\beta/\alpha = s/\beta\alpha$ by elementary arithmetic.

If $\sigma>0$ and $n>1$, then appealing to the fact that $\beta \omega^\sigma = \omega^{\tau + \sigma}$, we calculate that
$$
s/\beta/\alpha
    = s/\beta/\omega^\sigma n
    = s/\beta/\omega^\sigma/n
    = s/\beta\omega^\sigma/n
    = s/\beta \omega^\sigma n
    =s/\beta\alpha.
$$

If $\sigma>0$ and $n = 1$, then we first establish the equality for those $\beta$ such that $m = 1$.
\begin{align*}
s/\beta/ \alpha
 &  = s/\omega^\tau/\omega^\sigma
    = \< \Mu(s/\omega^\tau) \>
    = \< \limsup_{\xi \To \omega^\sigma} \Mu((s/\omega^\tau)\restrictto \xi) \>
    = \< \limsup_{\omega^\eta k \To \omega^\sigma}  \Mu((s/\omega^\tau)\restrictto \omega^\eta k) \>
 \\&= \< \limsup_{\omega^\eta k \To \omega^\sigma}
        (s/\omega^\tau/\omega^\eta k)(0) \>
    = \< \limsup_{\omega^\eta k \To \omega^\sigma}
        (s/\omega^{\tau +\eta} k)(0) \>
    = \< \limsup_{\omega^\eta k \To \omega^{\tau+\sigma}}
        (s/\omega^{\eta} k)(0) \>
 \\&= \< \limsup_{\omega^\eta k \To \omega^{\tau+\sigma}}
        \Mu (s \restrictto \omega^{\eta} k) \>
    = \< \limsup_{\xi \To \omega^{\tau + \sigma}} \Mu(s \restrictto \xi) \>
    = \< \Mu(s) \>
    = s/\beta\alpha.
\end{align*}
In this calculation, we have appealed to the fact that $\ell(s/\omega^\tau) = \omega^\sigma$ and therefore $\ell(s/\omega^\tau)$ is divisible by every nonzero ordinal less than $\omega^\sigma$, and similarly, to the fact that $\ell(s) = \omega^{\tau + \sigma}$ and therefore $\ell(s)$ is divisible by every nonzero ordinal less than $\omega^{\tau + \sigma}$.

If $\sigma > 0$ and $n =1$ and $m >1$, then we appeal to the previous case to calculate that 
\begin{align*}
        s/\beta/\alpha
   &=   s/\omega^\tau m /\omega^\sigma 
    =   s /\omega^\tau / m/ \omega^\sigma 
    =   s/\omega^\tau/ m/ \omega^1/ \omega^{ -1 + \sigma} 
 \\&=   s/\omega^\tau/\omega^1/\omega^{-1 + \sigma}
    =   s/\omega^{\tau + \sigma}
    =   s/\beta\alpha
\end{align*}
since $r/m /\omega = r/\omega$ for all $r \in \R_\sim^\omega$ by
elementary analysis. If $\sigma$ is finite, then $-1+\sigma$ is defined to be the predecessor of $\sigma$, and if $\sigma$ is infinite, then $-1+\sigma$ is defined to be simply equal to $\sigma$; in any case, $1+ (-1 + \sigma) = \sigma$.

Thus, we have shown that $s/\beta/\alpha = s/\beta\alpha$ for arbitrary finite $m$, establishing this equality in every remaining case.
\end{proof}

\section{Capturing spaces}

A probability space is a measure space $(X, \Sigma, m)$ such that $m(X) = 1$.

\begin{defn} Let $X$ be an arbitrary set and let $A \subseteq X$.
If $s\in X^*$, then $As \in \R^*_\sim$ denotes the sequence of the
same length as $s$, defined by $(As){(\xi)} = 1$ if $s{(\xi)} \in A$, and
$(As){(\xi)} = 0$ otherwise. In other words, $As$ is a shorthand for $\chi_A \circ s$, where $\chi_A$ is the characteristic function of $A$. 
\end{defn}

For each ordinal $\alpha$, we write $\omega_\alpha$ for the $\alpha^{\mathrm{th}}$ infinite cardinal, itself considered as an ordinal.

\begin{defn} Let $(X, \Sigma, m)$ be a probability space, and
let $A \subseteq X$ be measurable. A sequence $s \in X^*$
\emph{captures $A$ with resolution $\omega_\alpha$} in case $As/\omega_\alpha$ is a constant sequence with value $m(A)$. A sequence $s \in X^*$ captures a collection $\mathcal A$ of
measurable subsets of $X$ with resolution $\omega_\alpha$ in case it
captures each set $A \in \mathcal A$ with resolution
$\omega_\alpha$.
\end{defn}

The notion of a sequence capturing a measurable set is essentially
contained in the Strong Law of Large Numbers.

\begin{lem}\label{lem: ordering}
 Let $(X, \Sigma, m)$ be a probability space and
$\mathcal A \subseteq \Sigma$ be a countable collection.  Then there
is a sequence $s\in X^\omega$ that captures $\mathcal A$ with
resolution $\omega$.
\end{lem}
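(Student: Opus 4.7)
The plan is to invoke the strong law of large numbers (Fact \ref{fact: strong law}) simultaneously for every $A \in \mathcal{A}$ on the product probability space $X^\omega$, and then verify that a classical Ces\`aro limit forces the corresponding value of the upper mean $\Mu$.

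First, enumerate $\mathcal{A} = \{A_0, A_1, \ldots\}$ and equip $X^\omega$ with the product probability measure. Applying Fact \ref{fact: strong law} to the indicator $f = \chi_{A_k}$ yields a full-measure set $E_k \subseteq X^\omega$ on which the classical strong law holds for $A_k$. Since $\mathcal{A}$ is countable, $E := \bigcap_k E_k$ has full measure, hence is nonempty; fix any $x \in E$.

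Next I translate between the classical arithmetic mean and $\Mu$. For each $A \in \mathcal{A}$, the sequence $Ax$ lies in $\R^*_\sim$: it has length $\omega$ and is bounded between $0$ and $1$. For each finite $n$, the initial segment $Ax|_n$ is a concatenation of $n$ singletons, so properties (1) and (3) of Theorem \ref{thm: axioms} give
\[ \Mu(Ax|_n) \;=\; \frac{1}{n}\sum_{i<n} (Ax)_{(i)} \;=\; \frac{1}{n}\sum_{i<n} \chi_A(x_{(i)}). \]
Since $\omega$ is indecomposable, the definition of $\Mu$ together with $x \in E_k$ (so the classical limit, and hence the $\limsup$, equals $m(A)$) gives
\[ \Mu(Ax) \;=\; \limsup_{n \To \omega} \Mu(Ax|_n) \;=\; m(A). \]
Because $\ell(Ax) = \omega = \omega \cdot 1$, the sequence $Ax/\omega$ has length $1$ with sole value $\Mu(Ax) = m(A)$, so $Ax/\omega = m(A) \oplus m(A) \oplus \ldots$ as required. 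Hence $x$ captures $\mathcal{A}$ with resolution $\omega$.

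There is no substantive obstacle here: the mathematical content is entirely carried by the strong law of large numbers, together with the countability of $\mathcal{A}$ needed to intersect measure-one sets. The only care required is bookkeeping, namely observing that $\Mu$ reduces to the ordinary arithmetic mean on finite initial segments and that the $\limsup$ in the definition of $\Mu$ collapses to the classical limit granted by Fact \ref{fact: strong law}.
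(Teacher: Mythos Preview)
Your proof is correct and follows exactly the approach of the paper: apply the strong law of large numbers to each $A \in \mathcal A$ and use countable additivity to intersect the resulting measure-one sets. The paper's version is simply terser, omitting the verification that $\Mu$ agrees with the classical Ces\`aro average on initial segments of length $n$, which you spell out carefully.
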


\begin{proof}
The Strong Law of Large Numbers essentially states that almost all
$s\in X^\omega$ capture any given $A \in \mathcal A$ with resolution
$\omega$. Since measure is countably additive, it follows that
almost all $s\in X^\omega$ capture all $A \in \mathcal A$ with resolution
$\omega$.
\end{proof}

We now generalize Lemma \ref{lem: ordering} to uncountable
collections $\mathcal A$.

\begin{thm}\label{thm: capture}
 Let $(X, \Sigma, m)$ be a probability space. Let
$\mathcal A_0, \mathcal A_1, \ldots ,\mathcal A_\gamma$ be a
sequence of collections $\mathcal A_\alpha \subseteq \Sigma$ with
$\card \mathcal A _\alpha \leq \omega_\alpha$. Then there exists a 
sequence $s \in X^*$ of length $\omega_\gamma$ that captures each
collection $\mathcal A_\alpha$ with resolution $\omega_\alpha$.
\end{thm}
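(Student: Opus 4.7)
The plan is transfinite induction on $\eta$. The base case $\eta = 0$ is Lemma \ref{lem: ordering}. The inductive step splits into successor and limit subcases.

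For the successor step $\eta = \xi + 1$, every ordinal $\beta < \omega_\eta$ satisfies $|\beta| \leq \omega_\xi$. Enumerate $\mathcal A_\eta = \{B_\beta : \beta < \omega_\eta\}$. For each $\beta < \omega_\eta$ the augmented family $\mathcal A_\xi \cup \{B_\gamma : \gamma \leq \beta\}$ still has cardinality at most $\omega_\xi$, so applying the induction hypothesis at level $\xi$, with $\mathcal A_\xi$ replaced by this augmented family, yields $z_\beta \in X^{\omega_\xi}$ that captures everything at the respective resolutions. Set $x = z_0 \oplus z_1 \oplus \ldots_{\omega_\eta}$, of length $\omega_\xi \cdot \omega_\eta = \omega_\eta$. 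For $A \in \mathcal A_\alpha$ with $\alpha \leq \xi$, the $\omega_\alpha$-block boundaries of $Ax$ coincide with the $z_\beta$-boundaries and each $Az_\beta/\omega_\alpha$ is constantly $m(A)$, so $Ax/\omega_\alpha$ is constantly $m(A)$. For $B = B_\gamma \in \mathcal A_\eta$ I use Theorem \ref{thm: active} to write $Bx/\omega_\eta = (Bx/\omega_\xi)/\omega_\eta$; the sequence $Bx/\omega_\xi$ has length $\omega_\eta$ with entry at index $\beta$ equal to $\Mu(Bz_\beta)$, which equals $m(B)$ for every $\beta \geq \gamma$ by construction. Since $\gamma + \omega_\eta = \omega_\eta$, Theorem \ref{thm: domination} excises the initial segment to give $\Mu(Bx/\omega_\xi) = m(B) = \Mu(Bx)$.

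For the limit step, choose a continuous cofinal sequence $\eta_0 < \eta_1 < \ldots$ in $\eta$, partition $\mathcal A_\eta = \bigsqcup_\iota \mathcal A_\eta^{(\iota)}$ with $|\mathcal A_\eta^{(\iota)}| \leq \omega_{\eta_\iota}$, and apply the induction hypothesis at level $\eta_\iota$ to the sequence of collections in which each $\mathcal A_{\eta_{\iota_0}}$ (for $\iota_0 \leq \iota$) is replaced by $\mathcal A_{\eta_{\iota_0}} \cup \mathcal A_\eta^{(\iota_0)}$; this yields $z_\iota \in X^{\omega_{\eta_\iota}}$. Then $x = z_0 \oplus z_1 \oplus \ldots$ has length $\omega_\eta$, since the partial sums of the strictly increasing indecomposables $\omega_{\eta_\iota}$ are cofinal in $\omega_\eta$. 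For $A \in \mathcal A_\alpha$ with $\alpha < \eta$ and large $\iota$, $z_\iota$ starts at a position divisible by $\omega_\alpha$, so local $\omega_\alpha$-blocks of $z_\iota$ align with global $\omega_\alpha$-blocks; the first $\omega_\alpha$-block of $Ax$ absorbs a short prefix by Theorem \ref{thm: domination} and has mean $m(A)$, and all later blocks have mean $m(A)$ by capture. For $B \in \mathcal A_\eta^{(\iota_0)}$, the same pattern at resolution $\omega_{\eta_{\iota_0}}$ makes $Bx/\omega_{\eta_{\iota_0}}$ constantly $m(B)$, and Theorem \ref{thm: active} completes the verification at resolution $\omega_\eta$.

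The main obstacle I anticipate is the coordinated use of Theorems \ref{thm: active} and \ref{thm: domination} to turn ``eventually constant $m(B)$'' behavior of reduced sequences into genuine constancy at the outermost resolution, combined with checking alignment of block boundaries between the $z_\iota$ (whose lengths vary in the limit case) and the resolutions $\omega_\alpha$ being tested; this alignment uses the fact that $\omega_\alpha$ divides $\omega_{\eta_\iota}$ whenever $\alpha \leq \eta_\iota$, with any unaligned initial prefix absorbed by domination.
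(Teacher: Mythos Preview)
Your proposal is correct and follows essentially the same approach as the paper: transfinite induction on $\eta$, with the successor step concatenating $\omega_\eta$ many blocks of length $\omega_{\eta-1}$ (each capturing an increasing initial segment of $\mathcal A_\eta$ at the predecessor resolution), and the limit step concatenating blocks of increasing length indexed by a cofinal family in $\eta$; the paper simply takes that family to be all of $\eta$ and makes the harmless WLOG assumption that the $\mathcal A_\alpha$ are nested, which streamlines the bookkeeping. One small slip: in your limit case, $Bx/\omega_{\eta_{\iota_0}}$ need not be \emph{constantly} $m(B)$ (the initial block coming from $z_0,\ldots,z_{\iota_0-1}$ may be uncontrolled), but it is eventually constant $m(B)$, and Theorem~\ref{thm: domination} then gives $(Bx/\omega_{\eta_{\iota_0}})/\omega_\eta = m(B)$ as you intend.
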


\begin{proof}
Without loss of generality, we may assume that $\mathcal A_\alpha \subseteq \mathcal A_\beta$ whenever $\alpha \leq \beta \leq \gamma$. The proof is by transfinite induction on $\gamma$.  The $\gamma = 0$ case is Lemma \ref{lem: ordering}.

Hence, suppose that $\gamma$ is a limit ordinal. We want to write
$s$ as a concatenation $s = s_0 \concat s_1 \concat \cdots \concat s_\xi \concat \cdots \; (\xi < \gamma)$ of
sequences with $\ell(s_\xi) = \omega_\xi$, each of which will
capture an increasingly large portion of $\mathcal A_\gamma$.
Clearly there is a chain of sets $\mathcal B_0 \subseteq
\mathcal B_1 \subseteq \cdots \subseteq \mathcal B_\alpha \subseteq \cdots \; (\alpha < \gamma)$ whose union is
$\mathcal A_\gamma$ and which satisfies $\card \mathcal
B_\alpha \leq \omega_\alpha$ for $\alpha < \gamma$. Now use the induction
hypothesis to choose sequences $s_\xi$ of length $\omega_\xi$
that capture $\mathcal A_\alpha \cup \mathcal B_\alpha$ with
resolution $\omega_\alpha$ for all $\alpha \leq \xi$. The sequence $s$
captures $\mathcal A _\alpha \cup \mathcal B_\alpha$  with
resolution $\omega_\alpha$ for every $\alpha < \gamma$. To see this,
suppose that $A \in \mathcal A_\alpha \cup \mathcal B_\alpha$.
\begin{align*}
    As/\omega_\alpha & = (As_0 \concat \cdots \concat A s_\alpha \concat
            As_{\alpha +1} \concat \cdots \concat As_\xi \concat \cdots \; (\xi < \gamma))/\omega_\alpha\\
    &= (As_0 \concat \cdots \concat A s_\alpha)/\omega_\alpha \concat As_{\alpha
    +1}/\omega_\alpha \concat \cdots \concat As_\xi /\omega_\alpha \concat \cdots \; (\xi < \gamma)\\
     & =  A s_\alpha/\omega_\alpha \concat As_{\alpha
    +1}/\omega_\alpha \concat \cdots \concat As_\xi/\omega_\alpha \concat \cdots \; (\xi < \gamma)\\
    & = \< m(A) : \xi < \omega_\gamma \>
\end{align*}

Suppose now that $\gamma$ is a successor ordinal, and write $\gamma-1$ for its predecessor. We want to write $s$ as a
concatenation $s = s_0 \concat s_1 \concat \cdots \concat s_\xi \concat \cdots \; (\xi < \omega_\gamma)$, and each $s_\xi$ will have length $\omega_{\gamma-1}$, the
cardinal immediately preceding $\omega_\gamma$. We present $\mathcal
A_\gamma$ as the union of a chain $\mathcal B_0 \subseteq
\mathcal B_1  \subseteq \cdots \subseteq \mathcal B_\beta \subseteq \cdots \; (\beta < \omega_\gamma)$ such
that $\card \mathcal B_\beta \leq \omega_{\gamma-1}$. For each
$\xi < \omega_\gamma$, choose $s_\xi$ to be a sequence of
length $\omega_{\gamma-1}$ that captures $\mathcal A_\alpha$ with
resolution $\omega_\alpha$ for all $\alpha < \gamma -1$ and captures $\mathcal A_{\gamma -1 } \cup \mathcal B_\xi$ with resolution
$\omega_{\gamma -1 }$. The sequence $s$ captures $\mathcal A_\alpha$ with
resolution $\omega_\alpha$ for all $\alpha \leq {\gamma -1 }$, since each segment
$s_\xi$ does. To see this for $\alpha = \gamma $, suppose
that $A \in \mathcal B_\beta$ for some ordinal $\beta$. We now compute:
\begin{align*}
(A s)/ \omega_\gamma & = (A s_0 \concat \ldots \concat A
s_\beta\concat \cdots \concat As_\xi \concat \cdots \; (\xi < \omega_\gamma)) /\omega_\gamma
\\ & = (As_\beta \concat \cdots \concat As_\xi \concat \cdots \; (\xi < \omega_\gamma))/\omega_\gamma
\\  & = ((As_\beta \concat \cdots \concat As_\xi \concat \cdots \; (\xi < \omega_\gamma))/\omega_{\gamma -1 })/\omega_\gamma
\\ & = (As_\beta / \omega_{\gamma-1} \concat \cdots \concat As_\xi/ \omega_{\gamma-1} \concat \cdots \; (\xi < \omega_\gamma))/\omega_\gamma
\\ & = \< m(A) : \xi < \omega_\gamma \>/\omega_\gamma
= m(A).
\end{align*}
\end{proof}

\begin{defn}\label{defn: capture} A sequence $s\in X^*$ captures the measure
space $(X, \Sigma, m)$ in case it captures $\Sigma$ with resolution
$\ell(s)$, i.e., if for every set $A \in \Sigma$, we have $\Mu(As) = m(A)$.
\end{defn}

\begin{cor}\label{cor: capturing} Every probability space $(X, \Sigma, m)$ is
captured by a sequence $s \in X^*$.
\end{cor}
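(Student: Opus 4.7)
The plan is to derive this immediately from Theorem \ref{thm: capture} by choosing a suitable ordinal $\eta$ and a chain of collections whose terminal member is all of $\Sigma$. The only non-triviality is bookkeeping to make sure the resolution and the length of the resulting sequence agree, so that Definition \ref{defn: capture} applies.

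First, I would let $\eta$ be the least ordinal such that $\omega_\eta \geq \card \Sigma$; this exists because the infinite cardinals form an unbounded class of ordinals and $\Sigma$ is a set. For each $\alpha < \eta$, I would pick an arbitrary $\mathcal A_\alpha \subseteq \Sigma$ with $\card \mathcal A_\alpha \leq \omega_\alpha$ (for instance $\mathcal A_\alpha = \emptyset$), and set $\mathcal A_\eta = \Sigma$. By the choice of $\eta$, the hypotheses of Theorem \ref{thm: capture} are satisfied, so it produces a sequence $x \in X^*$ of length $\omega_\eta$ that captures each $\mathcal A_\alpha$ with resolution $\omega_\alpha$; in particular it captures $\mathcal A_\eta = \Sigma$ with resolution $\omega_\eta$.

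Finally, since $\ell(x) = \omega_\eta$, capturing $\Sigma$ with resolution $\omega_\eta$ means exactly that $Ax/\omega_\eta$ is a sequence of length $1$ whose unique value is $m(A)$, i.e., $\overline{\mathbb M}(Ax) = m(A)$ for every $A \in \Sigma$. This is precisely the condition in Definition \ref{defn: capture}, so $x$ captures $(X,\Sigma,m)$. There is no real obstacle here: all the analytic and combinatorial content has already been absorbed into Theorem \ref{thm: capture}, and the corollary is essentially just the observation that one may take $\mathcal A_\eta$ to be $\Sigma$ itself once $\omega_\eta$ is large enough.
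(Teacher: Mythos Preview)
Your proposal is correct and matches the paper's intended approach: the paper states the corollary without proof, treating it as immediate from Theorem~\ref{thm: capture}, and later (in the proof of Theorem~\ref{thm: ordering}) makes explicit exactly the instance you describe, namely $\mathcal A_\eta = \Sigma$ and $\mathcal A_\alpha = \emptyset$ for $\alpha < \eta$. Your bookkeeping that $Ax/\omega_\eta$ is the single value $\Mu(Ax)$ when $\ell(x)=\omega_\eta$ is precisely the link to Definition~\ref{defn: capture}.
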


\begin{thm}\label{thm: ordering}
Let $(X, \Sigma, m)$ be an atomless probability space such that $\mathrm{card} \, \Sigma \leq \mathrm{card}\, X$ and furthermore $\mathrm{card} \, A = \mathrm{card} \, X$ whenever $A \in \Sigma$ has positive measure. Then, there is a well-ordering of $X$ that captures $(X, \Sigma, m)$.
\end{thm}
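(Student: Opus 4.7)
Let $\kappa := \mathrm{card}\, X = \omega_\eta$. Using $\mathrm{card}\, \Sigma \leq \mathrm{card}\, X$, fix enumerations $X = \{y_\xi : \xi < \kappa\}$ and $\Sigma = \{A_\alpha : \alpha < \kappa\}$, and let $\mathcal{A}_\alpha = \{A_\beta : \beta < \omega_\alpha\}$ for $\alpha \leq \eta$, so that $\mathcal{A}_0 \subseteq \cdots \subseteq \mathcal{A}_\eta = \Sigma$ with $\mathrm{card}\, \mathcal{A}_\alpha \leq \omega_\alpha$. The plan is to re-run the transfinite induction of Theorem \ref{thm: capture} while threading two additional invariants: the segments produced at successive stages have pairwise disjoint ranges, and $y_\xi$ is included by stage $\xi$.

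Consider the successor case $\eta = \mu + 1$; write the target well-ordering as $z = z_0 \oplus z_1 \oplus \ldots_{\omega_\eta}$ with each $z_\xi$ of length $\omega_\mu$. Having constructed $z_0, \ldots, z_{\xi-1}$, the used set $U_\xi := \bigcup_{\xi' < \xi} \mathrm{range}(z_{\xi'})$ satisfies $\mathrm{card}\, U_\xi \leq \omega_\mu < \kappa$. I would choose $z_\xi$ to be injective, with range disjoint from $U_\xi$, containing the least $y_\alpha \notin U_\xi$, and capturing $\mathcal{A}_\alpha$ at resolution $\omega_\alpha$ for each $\alpha \leq \mu$ together with an exhausting subcollection $\mathcal{B}_\xi \subseteq \Sigma$ of cardinality at most $\omega_\mu$. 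Granting this, $z$ is injective by disjointness of ranges, surjective because $y_\xi$ has entered by stage $\xi$, and the capture property $\Mu(Az) = m(A)$ follows exactly as in Theorem \ref{thm: capture}: Theorem \ref{thm: active} reduces $\Mu(Az)$ to $\Mu((Az)/\omega_\mu)$, and Theorem \ref{thm: domination} discards the bounded prefix of length less than $\kappa$. The limit case of $\eta$ is handled in parallel, using segment lengths $\omega_\xi$ increasing through $\eta$.

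The main obstacle is the sub-claim that such $z_\xi$ exists at each stage --- simultaneously injective, avoiding $U_\xi$, containing a prescribed enumerated point, and satisfying all the capturing requirements of Theorem \ref{thm: capture}. The cardinality hypotheses enter precisely here: since $\mathrm{card}\, U_\xi < \kappa$ and every $Y \in \Sigma$ of positive measure has cardinality $\kappa$, the complement $X \setminus U_\xi$ still meets every positive-measure measurable set in a $\kappa$-sized slice. I would establish existence of $z_\xi$ by an inner transfinite recursion mirroring the proof of Theorem \ref{thm: capture}, observing at every step that the running forbidden set (prior values in $z_\xi$ together with $U_\xi$) has cardinality strictly less than $\kappa$, so the same hypothesis always supplies fresh points to extend the construction while preserving the capture property. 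The enumeration bound $\mathrm{card}\, \Sigma \leq \mathrm{card}\, X$ is used only to enumerate $\Sigma$ at the outset; the bookkeeping engine is the ``positive measure implies full cardinality'' condition.
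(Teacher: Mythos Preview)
Your skeleton matches the paper's: both re-run the induction of Theorem \ref{thm: capture} while threading the extra invariant that segments are injective with ranges avoiding a growing forbidden set of cardinality $<\kappa$. Where you diverge is in handling surjectivity. You force the least missing enumerated point into each segment; the paper instead first produces merely an injective capturing sequence $x$ of length $\kappa$, and then in a separate pass inserts the missed points $a_0, a_1, \ldots$ by prepending one to each $\omega$-block of $x$, invoking Theorem \ref{thm: domination} to see that $A(a_\xi \oplus x_\xi)/\omega = Ax_\xi/\omega$, so no $\Mu$-value is disturbed. The two-phase approach sidesteps bookkeeping you leave vague in the limit case: when $\eta$ is a limit there are only $\eta$ outer stages, so ``one prescribed point per stage'' cannot cover all $\omega_\eta$ points, and you would have to push many prescribed points into each segment through the inner recursion. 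That is doable, but the paper's insertion trick is cleaner.

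There is also a genuine gap at the base of your inner recursion. The $\eta=0$ step asks for an injective $\omega$-sequence in $X\setminus Y$ (with $\mathrm{card}\,Y<\kappa$) capturing a countable collection. Your phrase ``supplies fresh points to extend the construction'' suggests building term by term, but an $\omega$-sequence capturing even a single set is not assembled one point at a time---it comes from the strong law. The substantive step you are missing is the paper's: since every measurable subset of $Y$ has cardinality $<\kappa$ and hence measure zero, the assignment $A\cap(X\setminus Y)\mapsto m(A)$ is well-defined and makes $X\setminus Y$ into a probability space; apply Lemma \ref{lem: ordering} there, and use \emph{atomlessness} (which you never invoke) to conclude that almost every resulting sequence is injective. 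This is precisely where the two hypotheses of the theorem do their work, and your sketch does not account for it.
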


\begin{proof}
We adjust the proof of Theorem \ref{thm: capture} to show that if $\mathcal A_0, \mathcal A_1, \ldots, \mathcal A_\gamma$ is a sequence of collections $\mathcal A_\alpha \subseteq \Sigma$ with $\card \mathcal A_\alpha \leq \omega_\alpha$, and $Y$ is any subset of $X$ such that $\card Y < \card X$, \emph{and} $\omega_\gamma \leq \card X$, then there is a \emph{nonrepeating}, i.e., injective sequence $s \in (X \setminus Y)^*$ of length $\omega_\gamma$, which captures each collection $\mathcal A_\alpha$ with resolution $\omega_\alpha$.

The $\gamma =0$ case follows from the fact that almost all sequences in $X^\omega$ are nonrepeating when $X$ is atomless. This is easy to see when $Y$ is negligible, but $Y$ need not be negligible, as it may fail to be in $\Sigma$. In the general case, we apply our probabilistic reasoning to the probability space $X \setminus Y$. The restriction of measurable subsets to $X \setminus Y$ yields not only a $\sigma$-algebra on $X \setminus Y$, but also a probability measure on that $\sigma$-algebra. Indeed, let $A$ and $A'$ be any elements of $\Sigma$ such that $A \cap (X \setminus Y) = A' \cap (X \setminus Y)$. Their symmetric difference $A \triangle A'$ is a \emph{measurable} subset of $Y$, so $m(A\triangle A') = 0$. Thus $m(A) = m(A')$.

For the $\gamma>0$ cases, we modify our proof of Theorem \ref{thm: capture} by selecting the sequences $s_\xi$ recursively. At  the $\xi^{\mathrm{th}}$ stage of the construction, we have selected sequences $s_0, s_1, \ldots, s_\eta, \ldots \; (\eta < \xi)$, which range over a subset $Y_\xi \subseteq X$ of cardinality strictly smaller than $\omega_\gamma \leq \card X$. We then apply the induction hypothesis to the set $Y \cup Y_\xi$. By transfinite induction, we obtain the claimed variant of Theorem \ref{thm: capture}. The instance of interest is $Y = \emptyset$, $\omega_\gamma = \card \Sigma$, $\mathcal A_\gamma = \Sigma$, and $\mathcal A_\alpha = \emptyset$ for $\alpha < \gamma$. Thus, we have shown that there is a nonrepeating sequence $s \in X^*$ of length $\card X$ that captures $\Sigma$.

We now insert the elements of $X$ that do not appear in $s$,
so sparsely that they do not affect the behavior of $s$ with respect
to averaging functions. Specifically, writing $\omega_\delta$ for the cardinality of $X \setminus \ran(s)$,  we well-order the missed
elements $x_0, x_1, \ldots, x_\xi, \ldots \;(\xi< \omega_\delta)$, 
and we write $s$ as a
concatenation of sequences of length $\omega$: $s=s_1 \concat s_2
\concat \cdots \concat s_\xi \concat \cdots \; (\xi < \card X)$. Note that $\card X > \omega$ since a
countable probability space is necessarily not atomless. Now define
\begin{align*}&s' = (\<x_0\> \concat s_0 \concat \<x_1\> \concat s_1 \concat \cdots \concat \<x_\xi\> \concat s_\xi \concat \cdots \; (\xi < \omega_\delta)) \\ & \hspace{35ex}\concat (s_{\omega_\delta} \concat
s_{\omega_\delta +1} \concat \cdots \concat s_\xi \concat \cdots \; (\xi< \card X)).
\end{align*}
Each sequence of the form $\<x_\xi\> \concat s_\xi$ has length
$\omega$, so by Theorem \ref{thm: domination}, for every set $A \in \Sigma$, we have $A(\<x_\xi\> \concat
 s_\xi)/\omega = As_\xi/\omega$ for all $\xi< \omega_\delta$.  We therefore have
 that $As'/(\card X) = As'/\omega/(\card X) = As/\omega/(\card X) =
 As/(\card X) = m(A)$. We conclude that $s'$ captures $(X, \Sigma,
 m)$.
 \end{proof}
 
\begin{cor}\label{cor: completion}
There is a well-ordering of $X$ that captures the completion of $(X, \Sigma, m)$.

\end{cor}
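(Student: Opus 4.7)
The plan is to reuse the well-ordering produced by Theorem \ref{thm: ordering} applied to $(X, \Sigma, m)$ itself and show that this same sequence automatically captures every set in the completion. We do not try to apply Theorem \ref{thm: ordering} to the completion directly, because the cardinality hypothesis $\card \bar\Sigma \leq \card X$ typically fails even when it holds for $\Sigma$ (for instance, the Lebesgue $\sigma$-algebra on $[0,1]$ has cardinality $2^{\mathfrak c}$). The insight that makes reuse work is that $\Mu$ is both monotone (Corollary \ref{cor: bounded}) and subadditive (Corollary \ref{cor: subadditivity}), so an indicator sequence cannot be perturbed on a negligible set of positions without changing its mean.

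First I would invoke Theorem \ref{thm: ordering} on $(X, \Sigma, m)$ to obtain a well-ordering $x$ of $X$, of length $\card X$, such that $\Mu(Ax) = m(A)$ for every $A \in \Sigma$. Write $(X, \bar\Sigma, \bar m)$ for the completion. Given any $\bar A \in \bar\Sigma$, fix a decomposition $\bar A = A \triangle N$ with $A \in \Sigma$ and $N \subseteq M$ for some $M \in \Sigma$ satisfying $m(M) = 0$; by definition of the completion, $\bar m(\bar A) = m(A)$.

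Next, I would compare the $0$--$1$ sequences $\bar A x$ and $Ax$ position by position. Wherever $x_{(\xi)} \notin N$ the two sequences agree, and at every position where they disagree we have $x_{(\xi)} \in N \subseteq M$, so $(Mx)_{(\xi)} = 1$. Thus the following pointwise inequalities hold between sequences of equal length:
\[
\bar A x \leq Ax + Mx, \qquad Ax \leq \bar A x + Mx.
\]
Applying monotonicity and subadditivity of $\Mu$, together with the capture hypothesis $\Mu(Mx) = m(M) = 0$, we obtain
\[
\Mu(\bar A x) \leq \Mu(Ax) + \Mu(Mx) = m(A) = \Mu(Ax) \leq \Mu(\bar A x) + \Mu(Mx) = \Mu(\bar A x),
\]
so $\Mu(\bar A x) = m(A) = \bar m(\bar A)$. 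Since $\bar A \in \bar\Sigma$ was arbitrary, the well-ordering $x$ captures the completion.

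There is no real obstacle here beyond the observation above: the proof is essentially a one-line consequence of monotonicity, subadditivity, and the fact that capturing $\Sigma$ forces $\Mu(Mx) = 0$ for every $m$-null $M \in \Sigma$. The only conceptual pitfall is the temptation to try to apply Theorem \ref{thm: ordering} directly to $(X, \bar\Sigma, \bar m)$, which cannot be done because $\bar\Sigma$ need not satisfy the cardinality bound; recognizing that the existing sequence already does the job is the whole point.
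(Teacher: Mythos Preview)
Your proof is correct and follows essentially the same approach as the paper: reuse the well-ordering from Theorem~\ref{thm: ordering} and show that null perturbations do not affect $\Mu$. The only cosmetic difference is that the paper sandwiches $A'$ directly between the \emph{measurable} sets $A\setminus B$ and $A\cup B$ (both of measure $m(A)$) and invokes only monotonicity (Corollary~\ref{cor: bounded}), whereas you bound the pointwise difference by $Mx$ and appeal to subadditivity as well; the two arguments are interchangeable.
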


\begin{proof}
Let $s$ be a well-ordering that captures $(X, \Sigma, M)$.
We suppose that a set $A' \subseteq X$ differs from  $A
\in \Sigma$ by a null set. Then their symmetric difference $A \triangle A'$
is a subset of some $B \subseteq X$ of measure zero. It follows
that $A \setminus B \subseteq A' \subseteq A \cup B$, which
implies that $(A \setminus B)s \leq A's \leq (A\cup B)s$. The
sets $A \setminus B$ and $A \cup B$ are measurable with measure
$m(A)$, and therefore $m(A) \leq \overline{\mathbb M}(A's) \leq m(A)$. We
conclude that $s'$ captures the completion of $(X, \Sigma, m)$.
\end{proof}

\section{The transfinite mean}

\begin{defn} For $s \in \R^*_\sim$, $\Mu(s)$ is the \emph{upper
mean} of $s$. We define $\Ml\colon \R^*_\sim \To \R$ by $\Ml(s) = -
\Mu(-s)$, where $-s \in \R^*_\sim$ is the sequence of the same length as $s$ satisfying $(-s)(\xi) = -(s(\xi))$ for all $\xi < \ell(s)$. The quantity $\Ml(s)$ is the \emph{lower mean} of $s$. If
$\Ml(s) = \Mu(s)$, then the sequence $s$ has a \emph{mean} $\M(s) =
\Mu(s)$.
\end{defn}

Observe that $\Ml(s) \leq \Mu(s)$ for all $s \in \R^*_\sim$, because $0 \leq \Mu(s) + \Mu(-s)$ by Proposition \ref{prop: subadditivity}.

\begin{prop}\label{prop: linear and uniform}
For each ordinal $\alpha$, the class function $\M$ is $\R$-linear on $\R_\sim^\alpha \cap \dom (\M)$. Furthermore, for each limit ordinal $\lambda$, if $s_0, s_1, \ldots, s_\xi, \ldots \; (\xi< \lambda)$ is a sequence in $\R_\sim^\alpha \cap \dom (\M)$ that converges uniformly to some sequence $s \in \R_\sim^\alpha$, then $\M(s)$ is defined and equal to $\lim_{\xi \to \lambda} \M(s_\xi)$.
\end{prop}

\begin{proof} This proposition follows from the subadditivity of
$\Mu$ (Proposition \ref{prop: subadditivity}).
\end{proof}

\begin{prop}\label{prop: 6.4}
If $s \in X^*$ captures the probability space $(X, \Sigma, m)$, then for all $A \in \Sigma$,
$ \M(As) = m(A)$.
\end{prop}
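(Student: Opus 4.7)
The plan is to verify $\Mu(Ax) = m(A) = \Ml(Ax)$; since $\Ml(Ax) \leq \Mu(Ax)$ always holds (by the remark following the definition of $\Ml$), it suffices to show the upper bound $\Mu(Ax) \leq m(A)$ and the lower bound $\Ml(Ax) \geq m(A)$. The first is immediate: by Definition \ref{defn: capture}, the hypothesis that $x$ captures $(X, \Sigma, m)$ unpacks to $\Mu(Ax) = m(A)$ for every $A \in \Sigma$.

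For the lower bound, I would compute $\Mu(-Ax) = -m(A)$, since $\Ml(Ax) = -\Mu(-Ax)$ by definition. The inequality $\Mu(-Ax) \geq -m(A)$ is immediate from subadditivity (Corollary \ref{cor: subadditivity}) applied to the pointwise identity $Ax + (-Ax) = 0$, together with $\Mu(0) = 0$ from Corollary \ref{cor: bounded}. For the reverse inequality $\Mu(-Ax) \leq -m(A)$, I would exploit measurability of the complement: since $X \setminus A \in \Sigma$, and since $Ax$ and $(X \setminus A)x$ are pointwise complementary $\{0,1\}$-sequences summing to the constant sequence $Xx$, we may write $-Ax = (X \setminus A)x + (-Xx)$ pointwise. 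Subadditivity then yields
\[\Mu(-Ax) \leq \Mu((X \setminus A)x) + \Mu(-Xx) = (1 - m(A)) + (-1) = -m(A),\]
where $\Mu((X \setminus A)x) = m(X \setminus A) = 1 - m(A)$ by capturing at $X \setminus A$, and $\Mu(-Xx) = -1$ by Corollary \ref{cor: bounded} since $-Xx$ is constantly $-1$.

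Combining the two bounds gives $\Mu(-Ax) = -m(A)$, hence $\Ml(Ax) = m(A) = \Mu(Ax)$, and so $\M(Ax) = m(A)$. There is no real obstacle; the whole argument is essentially two applications of subadditivity, combined with the capturing hypothesis applied to both $A$ and its complement $X \setminus A$.
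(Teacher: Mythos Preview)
Your proof is correct and follows essentially the same approach as the paper: both arguments apply the capturing hypothesis to $A$ and to its complement $X\setminus A$, and then exploit the pointwise identity $Ax + (X\setminus A)x = Xx \equiv 1$ together with subadditivity of $\Mu$. The paper packages this into a single chain of equalities using the shift property $\Ml(c+s)=c+\Ml(s)$ (itself a consequence of subadditivity), whereas you invoke subadditivity twice explicitly; the content is the same.
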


\begin{proof}
The family $\Sigma$ is closed under complements, so $\Mu(As) = m(A)$ and $\Mu((X \setminus A)s) = m(X \setminus A)$.
$$m(A) = 1 - m(X \setminus A) = 1 - \Mu((X \setminus A)s) = 1 + \Ml(-(X \setminus A)s) = \Ml(1 - (X \setminus A)s) = \Ml(As)$$
Therefore, $\M(As) = m(A)$.
\end{proof}

\begin{thm}\label{thm: integral}
Let $X \subseteq \R^n$ have finite positive Lebesgue measure. Then, there
exists a well-ordering $s \in X^*$ of $X$ such that \[\int_X f(t_1, \ldots, t_n) \,
dt_1 \cdots dt_n = m(X) \cdot \M(f\circ s)\] for all bounded measurable $f \colon X \To \R$.
\end{thm}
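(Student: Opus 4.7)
The plan is to apply the machinery of capturing sequences to Lebesgue measure on $X$, normalized to a probability measure, and then extend the characteristic-function statement to all bounded measurable functions by linearity and uniform approximation. Let $m$ denote $n$-dimensional Lebesgue measure restricted to $X$ and set $m' = m/m(X)$, a probability measure. I would first produce a well-ordering of $X$ capturing the Borel $\sigma$-algebra $\mathcal B_X$ (with $m'|_{\mathcal B_X}$) and then pass to its completion.

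To apply Theorem \ref{thm: ordering} to $(X, \mathcal B_X, m'|_{\mathcal B_X})$, I would verify its three hypotheses: Lebesgue measure is atomless, so $m'$ is atomless; since $X$ has positive Lebesgue measure, $\card X = \mathfrak c$, and $\card \mathcal B_X \leq \mathfrak c$ because every Borel subset of $X$ is the intersection of $X$ with a Borel subset of $\R^n$; finally, every Borel subset $Y \subseteq X$ of positive measure has cardinality $\mathfrak c$ (it contains a perfect set). Theorem \ref{thm: ordering} then gives a well-ordering $x \in X^*$ capturing $(X, \mathcal B_X, m'|_{\mathcal B_X})$, and Corollary \ref{cor: completion} upgrades this to a well-ordering (which we continue to call $x$) that captures the completion, i.e., the Lebesgue $\sigma$-algebra on $X$ equipped with $m'$.

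By Proposition \ref{prop: 6.4}, $\M(Ax) = m'(A) = m(A)/m(X)$ for every Lebesgue measurable $A \subseteq X$, hence $m(X)\,\M(Ax) = m(A) = \int_X \chi_A\, dt$, establishing the theorem for characteristic functions. For a simple function $f = \sum_{i=1}^k c_i \chi_{A_i}$ with $A_i$ measurable, $f \circ x = \sum_i c_i (A_i x)$ pointwise, so Corollary \ref{cor: linear} yields $m(X)\,\M(f \circ x) = \sum_i c_i\, m(A_i) = \int_X f\, dt$. Given a bounded measurable $f \colon X \to \R$, I would choose simple $f_k$ with $\|f_k - f\|_\infty \to 0$; then $f_k \circ x \to f \circ x$ uniformly on $\ell(x)$, so by Corollary \ref{cor: uniform} the mean $\M(f \circ x)$ exists and equals $\lim_k \M(f_k \circ x)$, while uniform convergence on the finite-measure space $X$ gives $\int_X f_k \, dt \to \int_X f \, dt$. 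Combining these yields the desired identity for $f$.

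The main obstacle is the mismatch between Theorem \ref{thm: ordering}, which requires $\card \Sigma \leq \card X$, and the Lebesgue $\sigma$-algebra on $X$, whose cardinality is $2^{\mathfrak c}$. The remedy is to invoke the theorem on the Borel $\sigma$-algebra first (whose cardinality is $\mathfrak c$) and then use Corollary \ref{cor: completion} to extend to Lebesgue; this is a small but essential move. The remaining steps (linearity on simple functions and uniform approximation of bounded measurable functions) are routine once the capturing well-ordering is in hand.
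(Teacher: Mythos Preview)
Your proposal is correct and follows essentially the same route as the paper: normalize, apply Theorem~\ref{thm: ordering} to the Borel $\sigma$-algebra (verifying the cardinality hypotheses), upgrade via Corollary~\ref{cor: completion} to the Lebesgue completion, and then pass from characteristic functions to bounded measurable functions by linearity and uniform approximation by simple functions. The only cosmetic difference is that the paper invokes Borel determinacy for the perfect-set property while you cite it directly, and you spell out the simple-function and uniform-limit steps more explicitly via Corollaries~\ref{cor: linear} and~\ref{cor: uniform}.
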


\begin{proof}
We can assume that $m(X) =1$. Let $\Sigma$ be the $\sigma$-algebra of Borel sets restricted to $X$. Then, certainly $\mathrm{card}\, \Sigma = \mathrm{card}\,X = 2^{\aleph_0}$, and $\mathrm{card} \, A = \mathrm{card} \, X = 2^{\aleph_0}$ whenever $A \in \Sigma$ has positive measure. Both facts follow from Borel determinacy, which implies that every Borel set is either countable or has the cardinality of the continuum.
Thus, $X$ has a well ordering that captures the completion of $(X, \Sigma, m)$, which includes all Lebesgue measurable subsets of $X$ (Corollary \ref{cor: completion}). The integral equation then follows from the fact that every bounded measurable function can be uniformly approximated by simple functions. 
\end{proof}

\section{The club filter}

Woodin observed that there exists a probability space $(X, \Sigma, m)$ that cannot be captured by a sequence of length $\card X$. This section records Woodin's argument as it is imperfectly recalled by the author. Jech's \textit{Set Theory} includes the elementary results about the closed unbounded filter, i.e., the club filter that we use below \cite[I.8]{Jech}.

\begin{lem}\label{lem: xi-omega}
Let $\kappa$ be a cardinal whose cofinality is greater than $\omega$, and let $s \in \R^*_\sim$ be a sequence of length $\kappa$. Then, there is an ordinal $\xi_\omega$ such that $\sup_{ \xi \geq \xi_\omega} \Mu(s \restrictto \xi) = \Mu(s)$. Furthermore, if $\Ml(s) = \Mu(s)$, then there is an ordinal $\xi_s$ such that $\M(s \restrictto \xi) = \M(s)$ for all $\xi \geq \xi_s$.
\end{lem}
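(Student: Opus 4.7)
The hypothesis $\mathrm{cf}(\kappa) > \omega$ forces $\kappa$ to be an uncountable infinite cardinal, which is therefore indecomposable as an ordinal. By clause (2) of Definition~\ref{defn: upper mean},
\[\Mu(s) = \limsup_{\xi \to \kappa} \Mu(s|_\xi) = \inf_{\xi_0 < \kappa} \sup_{\xi_0 \leq \xi < \kappa} \Mu(s|_\xi).\]
Define $f\colon \kappa \to \R$ by $f(\xi_0) = \sup_{\xi_0 \leq \xi < \kappa} \Mu(s|_\xi)$. By Corollary~\ref{cor: bounded}, $f$ takes real values; it is manifestly non-increasing and satisfies $\inf f = \Mu(s)$. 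The first assertion is exactly the claim that $f$ attains its infimum, so my plan is to prove $f$ is eventually constant.

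Let $V$ be the range of $f$. If $V$ contained an infinite strictly increasing sequence of reals, it would correspond to a strictly decreasing infinite sequence of ordinal arguments of $f$, which is impossible; so $V$ is reverse well-ordered as a subset of $\R$ and hence at most countable. Because $f$ is non-increasing, each level set $A_v = f^{-1}(v)$ is convex in $\kappa$, and distinct level sets are linearly ordered in the reverse of the order on $V$. If $V$ were infinite, enumerate it as $v_0 > v_1 > v_2 > \cdots$; then each $A_{v_n}$ is bounded above in $\kappa$ by the least element of $A_{v_{n+1}}$, and these upper bounds form a countable cofinal subset of $\kappa$, contradicting $\mathrm{cf}(\kappa) > \omega$. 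Hence $V$ is finite with least element $v_k = \Mu(s)$, and $A_{v_k}$ is a final segment of $\kappa$; setting $\xi_\omega = \min A_{v_k}$ gives the required bound.

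For the second statement, I would apply the first claim to both $s$ and $-s$, each of which is a bounded sequence of length $\kappa$. This yields $\xi_\omega^+$ with $\sup_{\xi \geq \xi_\omega^+} \Mu(s|_\xi) = \Mu(s)$ and $\xi_\omega^-$ with $\sup_{\xi \geq \xi_\omega^-} \Mu((-s)|_\xi) = \Mu(-s)$; using $\Mu(-t) = -\Ml(t)$, the latter rewrites as $\inf_{\xi \geq \xi_\omega^-} \Ml(s|_\xi) = \Ml(s)$. Taking $\xi_s = \max(\xi_\omega^+, \xi_\omega^-)$, for every $\xi \geq \xi_s$ the chain $\Ml(s) \leq \Ml(s|_\xi) \leq \Mu(s|_\xi) \leq \Mu(s)$ collapses under the hypothesis $\Ml(s) = \Mu(s)$, so $\M(s|_\xi)$ is defined and equals $\M(s)$. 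The main obstacle is the finiteness of $V$: this is the only nontrivial step, and it is precisely where the boundedness of $s$ (giving a real-valued $f$) and the uncountable cofinality of $\kappa$ must be combined to preclude infinite oscillation.
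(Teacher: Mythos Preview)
Your overall strategy---show that the non-increasing function $f(\xi_0)=\sup_{\xi\geq\xi_0}\Mu(s|_\xi)$ is eventually constant, then handle the second clause by duality---is sound and matches the paper's approach in spirit. The second half of your argument is correct and essentially identical to the paper's.

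The gap is in the first half: your claim that the range $V$ of $f$ must be \emph{finite} is false, and your argument for it does not go through. A reverse well-ordered subset of $\R$ need not have order type $\omega^*$; it can have any countable dual order type. Concretely, take $\kappa=\omega_1$ and let $s$ be the sequence that is $1$ on the first $\omega$ coordinates and $0$ thereafter. Then $\Mu(s|_{\omega\cdot n})=1/n$ for each $n\geq 1$ and $\Mu(s|_\xi)=0$ for $\xi\geq\omega^2$, so $f$ takes every value $1/n$ as well as $0$, and $V=\{1,1/2,1/3,\ldots\}\cup\{0\}$ is infinite. Your enumeration ``$v_0>v_1>v_2>\cdots$'' misses the tail element $0$, and the upper bounds of the level sets $A_{1/n}$ are all below $\omega^2$, hence not cofinal in $\omega_1$; no contradiction arises.

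What you actually need is the weaker (and true) statement that $f$ attains its infimum, and this follows by the paper's direct argument: choose $\xi_n$ with $f(\xi_n)<\Mu(s)+\tfrac{1}{n+1}$, set $\xi_\omega=\sup_n\xi_n<\kappa$ (using $\mathrm{cf}(\kappa)>\omega$), and observe $f(\xi_\omega)\leq f(\xi_n)$ for all $n$, whence $f(\xi_\omega)=\Mu(s)$. Replacing your finiteness claim with this two-line computation repairs the proof completely.
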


\begin{proof}
By definition, $\displaystyle\Mu(s) = \limsup_{\xi \To \kappa}\Mu(s \restrictto \xi)$, so there is an increasing sequence $\xi_0, \xi_1, \ldots$ in $\kappa$ such that $\displaystyle\left|\Mu(s) - \sup_{\xi \geq \xi_n} \Mu(s \restrictto \xi)\right| < \frac 1 {n+1}$ for all $n < \omega$. We have assumed that the cofinality of $\kappa$ is greater than $\omega$, so the supremum $\xi_\omega = \bigcup_n \xi_n$ is an element of $\kappa$. Thus, for all $n  < \omega$, we have
$$ \Mu(s)= \limsup_{\xi \To \kappa}\Mu(s \restrictto \xi) \leq \sup_{\xi \geq \xi_\omega} \Mu(s \restrictto \xi) \leq \sup_{\xi \geq \xi_n} \Mu(s \restrictto \xi) \leq \Mu(s) + \frac 1 {n+1},$$
so $\Mu(s) = \sup_{ \xi \geq \xi_\omega} \Mu(s \restrictto \xi)$. Replacing $s$ with $-s$, we find that there is an ordinal $\xi_\omega^-$ such that $\Ml(s) = \inf_{\xi \geq \xi_\omega^-} \Ml(s \restrictto \xi)$.

Assume that $\Ml(s) = \Mu(s)$, and let $\xi_s$ be the larger of $\xi_\omega$ and $\xi_\omega^-$. We calculate that
$$\sup_{\xi \geq \xi_s} \Mu(s \restrictto \xi) \leq \sup_{\xi \geq \xi_\omega} \Mu(s \restrictto \xi) = \Mu(s) = \Ml(s) = \inf_{\xi \geq \xi_\omega^-} \Ml(s \restrictto  \xi) \leq \inf_{\xi \geq \xi_s} \Ml(s \restrictto \xi) \leq \inf_{\xi \geq \xi_s} \Mu(s \restrictto \xi).$$
\end{proof}

\begin{prop}\label{prop: club}
Let $\kappa$ be a cardinal whose cofinality is greater than $\omega$, and let $s \in \R^*_\sim$ be a sequence of length $\kappa$. Then, the set $\{\xi \in \kappa\, |\, \Mu(s \restrictto \xi) = \Mu(s)\}$ has a subset that is club in $\kappa$.
\end{prop}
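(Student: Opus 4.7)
The plan is to build an increasing, continuous sequence of indecomposable ordinals on which $\Mu(s|_\xi)$ is constantly equal to $\Mu(s)$, of length $\mathrm{cof}(\kappa)$; its range will be the desired club. Write $L = \Mu(s)$ and $f(\xi) = \Mu(s|_\xi)$. By Lemma \ref{lem: xi-omega}, fix $\xi_\omega < \kappa$ with $\sup_{\xi \geq \xi_\omega} f(\xi) = L$; in particular $f(\xi) \leq L$ for every $\xi \geq \xi_\omega$. Moreover, since $\kappa$ is an infinite cardinal it is indecomposable as an ordinal, so by Definition \ref{defn: upper mean}(2) any indecomposable $\xi$ with $1 < \xi < \kappa$ satisfies $f(\xi) = \limsup_{\eta \to \xi} f(\eta)$.

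The key step is a lemma: for every $\gamma < \kappa$ there exists an indecomposable $\xi^* \in (\gamma, \kappa)$ with $f(\xi^*) = L$. To produce it, set $\tilde\eta_0 = \max(\gamma, \xi_\omega)$ and recursively pick $\eta_{k+1} > \tilde\eta_k$ with $f(\eta_{k+1}) > L - \frac{1}{k+1}$ (possible since $\sup_{\xi > \tilde\eta_k} f(\xi) = L$), then put $\tilde\eta_{k+1} := \omega^{\eta_{k+1}+1}$. Because $\mathrm{cof}(\kappa) > \omega$, the ordinal $\xi^* := \sup_k \tilde\eta_k$ lies in $\kappa$; continuity of $\omega^{(\cdot)}$ gives $\xi^* = \omega^{\sup_k(\eta_k+1)}$, which is indecomposable. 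The sequence $\eta_{k+1}$ is cofinal in $\xi^*$ and $f(\eta_{k+1}) \to L$, so $f(\xi^*) = \limsup_{\eta \to \xi^*} f(\eta) \geq L$; combined with $f(\xi^*) \leq L$, this gives $f(\xi^*) = L$.

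Next, set $\mu = \mathrm{cof}(\kappa)$, fix an increasing cofinal sequence $(d_\xi)_{\xi < \mu}$ in $\kappa$, and define $(c_\xi)_{\xi < \mu}$ by transfinite recursion: take $c_0$ to be an indecomposable with $f(c_0) = L$ supplied by the key step; at successors pick $c_{\xi+1}$ to be an indecomposable strictly greater than $\max(c_\xi, d_{\xi+1})$ with $f(c_{\xi+1}) = L$, again via the key step; and at limits $\lambda < \mu$ set $c_\lambda = \sup_{\xi < \lambda} c_\xi$, which is less than $\kappa$ by the definition of cofinality. Each $c_\xi$ has the form $\omega^{\beta_\xi}$, and monotonicity of the recursion forces $\beta_\xi$ to be strictly increasing, so at a limit $\lambda$ the continuity of $\omega^{(\cdot)}$ yields $c_\lambda = \omega^{\sup_{\xi < \lambda} \beta_\xi}$, which is still indecomposable. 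Consequently $f(c_\lambda) = \limsup_{\eta \to c_\lambda} f(\eta) \geq \lim_\xi f(c_\xi) = L$, and $f(c_\lambda) \leq L$, giving $f(c_\lambda) = L$ as well. The range $C = \{c_\xi : \xi < \mu\}$ is then closed under bounded suprema (by continuity of the recursion, with case 3 $\lambda = \mu$ excluded since cofinal sequences are unbounded) and unbounded in $\kappa$ (since $c_\xi \geq d_\xi$), so $C$ is club in $\kappa$ and is contained in $\{\xi \in \kappa : \Mu(s|_\xi) = \Mu(s)\}$.

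The main subtlety is ensuring the recursion preserves membership in the target set at limit stages: this is arranged by confining everything to powers of $\omega$, at which the $\limsup$ formula for $f$ combined with the uniform upper bound $f \leq L$ from Lemma \ref{lem: xi-omega} pins down $f(c_\lambda)$ exactly.
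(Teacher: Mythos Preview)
Your proof is correct and rests on the same ideas as the paper's: restrict attention to indecomposable ordinals above the $\xi_\omega$ supplied by Lemma~\ref{lem: xi-omega}, use the $\limsup$ clause of Definition~\ref{defn: upper mean} at such ordinals, and exploit the uniform bound $\Mu(s|_\xi)\le \Mu(s)$ for $\xi\ge\xi_\omega$ together with a cofinal sequence on which $\Mu(s|_\xi)\to\Mu(s)$.

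The only difference is organizational. The paper takes $C$ to be the \emph{entire} set of indecomposable $\xi\in(\xi_\omega,\kappa)$ with $\Mu(s|_\xi)=\Mu(s)$ and checks directly that $C$ is closed (any sup of elements of $C$ is again indecomposable, and the $\limsup$ sandwich gives $\Mu(s|_{\sup B})=\Mu(s)$) and unbounded (via the same $\omega$-sequence construction you call the ``key step''). You instead prove unboundedness first and then manufacture a club subset by a continuous transfinite recursion of length $\mathrm{cof}(\kappa)$, re-running the sandwich argument at limit stages. The paper's route is slightly shorter since it avoids the explicit recursion, while yours makes the club more concrete; mathematically the content is the same.
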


\begin{proof}
Applying Lemma \ref{lem: xi-omega}, let $\xi_\omega \in \kappa$ be an ordinal such that $\sup_{ \xi \geq \xi_\omega} \Mu(s \restrictto \xi) = \Mu(s)$. 
Note that in fact $\Mu(s) = \sup_{ \xi \geq \eta} \Mu(s \restrictto \xi)$ for any $\eta \geq \xi_\omega$, since $\Mu(s) = \limsup_{\xi \To \kappa} \Mu(s \restrictto \xi)$. 
Let $C$ be the set of indecomposable ordinals $\xi$ between $\xi_\omega$ and $\kappa$ that satisfy $\Mu(s \restrictto \xi) = \Mu(s)$.

To show that $C$ is closed, let $B \subseteq C$ be a bounded nonempty subset. If $B$ does not contain its own supremum, then there is an increasing sequence of indecomposable ordinals $\omega^{\sigma_0}, \omega^{\sigma_1}, \ldots, \omega^{\sigma_\eta}, \ldots \; (\eta < \lambda)$ in $B$ that converges to $\sup B$. Ordinal exponentiation is a normal operation, so $\sup B = \omega^{\lim_\eta \sigma_\eta}$ is indecomposable, and $\Mu(s \restrictto {\sup B}) = \limsup_{\xi \To \sup B} \Mu(s \restrictto \xi)$. Thus, by definition of $C$, the quantity $\Mu(s \restrictto {\sup B})$ is at least $\Mu(s)$, and since $\sup B \geq \xi_\omega$, the quantity $\Mu(s \restrictto {\sup B})$ is at most $\Mu(s)$, so $\Mu(s \restrictto {\sup B}) = \Mu(s)$. We conclude that $C$ is closed.

To show that $C$ is unbounded, let $\eta_0$ be any element of $\kappa$ larger than $\xi_\omega$. We may choose a strictly increasing sequence $\eta_0, \eta_1, \ldots$ such that $\Mu(s \restrictto {\eta_0}), \Mu(s \restrictto {\eta_1}), \ldots$ is a monotonically increasing sequence converging to $\Mu(s)$. We may choose each ordinal $\eta_k$ to be of the form $\omega^{\sigma_k}n_k$, by discarding remainders. If $\sigma_0, \sigma_1, \ldots$ is eventually constant with value $\sigma$, then $\lim_k \eta_k = \omega^{\sigma+1}$. Otherwise $\lim_k \eta_k = \omega^{\lim_k\sigma_k}$. In either case, we conclude that $\eta_\omega = \lim_k \eta_k$ is indecomposable. It is also an element of $\kappa$, because the latter has cofinality greater than $\omega$. We calculate that
$$ \Mu(s) \geq \Mu(s \restrictto {\eta_\omega}) = \limsup_{\xi \To \eta_\omega} \Mu(s \restrictto \xi) \geq \lim_{k \To \omega} \Mu(s \restrictto {\eta_k}) = \Mu(s).$$ 
The first inequality is true because $\eta_\omega> \xi_\omega$, and $ \sup_{\xi \geq \xi_\omega} \Mu(s \restrictto \xi) = \Mu(s)$, by choice of $\xi_\omega$. We conclude that $\eta_\omega> \eta_0$ is an element of $C$. Therefore, $C$ is unbounded.
\end{proof}

\begin{thm}
Let $\kappa$ be an uncountable cardinal whose cofinality is $\kappa$. Let $$\mathrm{club}(\kappa) = \{ A \subseteq \kappa \,|\, C\subseteq A \text{ for some club set C}\}.$$
Let $\Sigma$ be the $\sigma$-algebra generated by $\mathrm{club}(\kappa)$. Let $m : \Sigma \To \{0,1\}$ be defined by $m(A) = 1$ if and only if $A \in \mathrm{club}(\kappa)$. Then, $(\kappa, \Sigma, m)$ is a measure space that is not captured by any sequence of length $\kappa$.
\end{thm}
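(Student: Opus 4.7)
The plan is to argue by contradiction. Suppose $x \in \kappa^\kappa$ captures $(\kappa, \Sigma, m)$. Because $m$ is $\{0,1\}$-valued and $\Sigma$ is closed under complements, Proposition~\ref{prop: 6.4} forces $\M(Ax) = m(A)$ for every $A \in \Sigma$; in particular $\M(Cx) = 1$ for every club $C \subseteq \kappa$. Applying Proposition~\ref{prop: club} to both $Cx$ and $(\kappa \setminus C)x$ yields, for each such $C$, a club $D_C \subseteq \kappa$ on which $\M(Cx|_\xi) = 1$. These are the only analytic ingredients; the rest of the argument is combinatorial, and the goal is to exhibit a single club $C^* \subseteq \kappa$ whose capture condition fails.

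Next I would exploit the self-referential fact that $x$ takes values in $\kappa$ itself. Since $\kappa$ is regular, the set $E = \{\alpha < \kappa : x[\alpha] \subseteq \alpha, \, \alpha \text{ is a limit}\}$ is a club. For each $\alpha \in E$, the image $\mathrm{range}(x|_\alpha) \subseteq \alpha$ has cardinality strictly less than $\kappa$, so there is a club $C_\alpha \subseteq \kappa$ of limit ordinals disjoint from $\mathrm{range}(x|_\alpha)$ (take, for instance, the intersection of $[\alpha, \kappa)$ with the club of limits). Setting $C_\alpha = \kappa$ for $\alpha \notin E$, define
$$C^* \;=\; E \;\cap\; \mathop{\mathlarger{\mathlarger \Delta}}_{\alpha \in \kappa} C_\alpha,$$
which is a club in $\kappa$ by the hypothesis $\mathrm{cof}(\kappa) = \kappa$. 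By the defining property of the diagonal intersection, every $\beta \in C^*$ satisfies $\beta \in C_\alpha$ for each $\alpha < \beta$, hence $\beta \notin \mathrm{range}(x|_\alpha)$ and $x(\gamma) \neq \beta$ for all $\gamma < \beta$. Contrapositively, $x(\gamma) \in C^*$ forces $\gamma \geq x(\gamma)$, so $C^*x$ vanishes on $S = \{\gamma < \kappa : x(\gamma) > \gamma\}$.

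The final and hardest step is to deduce $\M(C^*x) < 1$, or equivalently $\Mu(\chi_S) > 0$. Because $\Mu$ is defined through iterated $\limsup$, mere stationarity of $S$ is insufficient: a stationary set of zeros need not prevent $\Mu$ from reaching $1$. The intended route is to apply Proposition~\ref{prop: club} to $C^*x$ itself, extracting a club of $\xi$ on which $\M(C^*x|_\xi) = 1$, and then invoke Fodor's lemma on the closure-point club $E$, where $\kappa \setminus S$ consists of regressive-or-fixed-point indices of $x$. The capture condition forces $\Mu(\chi_{x^{-1}(\{c\})}) = 0$ for every $c < \kappa$ (since $\{c\}$ is disjoint from the club $(c, \kappa)$), which precludes $x$ from stabilizing to a constant value on a $\Mu$-dense stationary subset of $E \setminus S$; combined with the normality of the club filter, this should force $\Mu(\chi_{E \setminus S}) < 1$, hence $\Mu(\chi_S) > 0$.

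The principal obstacle is this last density estimate: bridging the stationary combinatorics of the club filter with the $\limsup$ structure of $\Mu$. The difficulty is genuine, because $\Mu$-density is strictly stronger than cofinality or stationarity, and the filter-theoretic structure of $\mathrm{club}(\kappa)$ must be translated through the nested $\limsup$'s defining $\Mu$. I expect this step to be the technical heart of Woodin's argument, requiring either a reflection move to a subordinate regular cardinal of smaller cofinality (inductively ruling out capture there) or an iterated application of Proposition~\ref{prop: club} that accumulates enough stationary zeros to force a density bound on an entire indecomposable initial segment.
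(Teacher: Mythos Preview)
Your proposal has a genuine gap at precisely the point you flag: you cannot deduce $\Mu(\chi_S) > 0$ from the fact that $C^*x$ vanishes on $S = \{\gamma : x(\gamma) > \gamma\}$. Nothing in the hypotheses forces $S$ to be $\Mu$-dense; indeed, a capturing sequence could very well satisfy $x(\gamma) \leq \gamma$ on a set whose complement has upper mean $0$, in which case your $C^*$ yields no contradiction at all. The sketch in your last paragraph (Fodor on $E \setminus S$, singletons being null, reflection to smaller cofinalities) does not close this: Fodor only gives stationarity of a fibre, and you have already correctly observed that stationarity does not control $\Mu$. The difficulty is not merely technical; the route through closure points of $x$ appears to be a dead end.

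The paper's argument is structurally different and does not use the dynamics of $x$ as a self-map of $\kappa$ at all. Instead, for each $A \in \mathrm{club}(\kappa)$ one invokes Lemma~\ref{lem: xi-omega} to define $\phi(A)$ as the least ordinal beyond which $\M(Ax|_\xi) = 1$ holds for every $\xi$. The key observation is that $\sup_A \phi(A) = \kappa$: if some $\xi_0 < \kappa$ bounded all $\phi(A)$, then $x|_{\xi_0}$ would already capture the club filter, which is impossible since every bounded subset of $\kappa$ is null. One then chooses, for each $\alpha < \kappa$, a club $A_\alpha$ with $\phi(A_\alpha) > \omega^{\alpha+1}$, and sets $D = \Delta_\alpha A_\alpha$. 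By the definition of diagonal intersection, $D \subseteq \{0,\ldots,\phi(D)\} \cup A_{\phi(D)}$; since the initial block of length $\phi(D)$ is absorbed once $\xi \geq \omega^{\phi(D)+1}$, one gets $\M(Dx|_\xi) \leq \M(A_{\phi(D)}x|_\xi)$ for such $\xi$. But $\phi(A_{\phi(D)}) > \omega^{\phi(D)+1}$, so some such $\xi$ makes the right side $<1$, contradicting $\xi \geq \phi(D)$. The self-reference is in $\phi$, not in $x$, and this sidesteps entirely the density estimate you were unable to prove.
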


\begin{proof}
Write $\mathrm{club}^c(\kappa)$ for the set of complements of sets in $\mathrm{club}(\kappa)$. The intersection of countably many club sets is club, so $\Sigma = \mathrm{club}(\kappa) \cup \mathrm{club}^c(\kappa)$. Indeed, if a countable family of sets in $\Sigma$ includes an element of $\mathrm{club}^c(\kappa)$, then its intersection is in $\mathrm{club}^c(\kappa)$, and otherwise, the family consists of elements of $\mathrm{club}(\kappa)$ so its intersection is in $\mathrm{club}(\kappa)$.

Suppose that there is a sequence $s \in {\kappa}^{\kappa}$ that captures $(\kappa, \Sigma, m)$. For each $A \in \mathrm{club}(\kappa)$, we have $\M(As) = 1$, as in Proposition \ref{prop: 6.4}. Invoking Lemma \ref{lem: xi-omega}, let $\phi(A) \in \kappa$ be the least ordinal such that $\M(As \restrictto \xi) = 1$ for all $\xi \geq \phi(A)$. The supremum of the family $\{\phi(A) \, | \, A \in \mathrm{club}(\kappa)\}$ is certainly $\kappa$, since the measure space $(\kappa, \Sigma, m)$ cannot be captured by a sequence of smaller length; indeed, all subsets of $\kappa$ of smaller cardinality are measure zero. Thus, this family contains ordinals arbitrarily large in $\kappa$. For each, $\alpha \in \kappa$, choose a set $A_\alpha \in \Sigma$ such that $\phi(A_\alpha ) > \omega^{\alpha +1} $. Note that $\alpha + \omega^{\alpha +1} = \omega^{\alpha +1}$, because $\omega^{\alpha+1}$ is an indecomposable ordinal larger than $\alpha$.

The diagonal intersection $D = \Delta_{\alpha \in \kappa} A_\alpha$ is also in $\mathrm{club}(\kappa)$. By definition of diagonal intersection, $D \subseteq \{1, \ldots, \phi(D)\} \cup A_{\phi(D)}$, so for all $\xi \in \kappa$,
$$\M(Ds \restrictto \xi) \leq \M((\{1, \ldots, \phi(D)\} \cup A_{\phi(D)})s \restrictto \xi).$$
Since $\phi(D) + \omega^{\phi(D) +1} = \omega^{\phi(D) +1}$, we may neglect initial sequences of length $\phi(D)$ whenever $\xi \geq \omega^{\phi(D)+1}$; for such $\xi$, we have $1 = \M(Ds \restrictto \xi) \leq \M(A_{\phi(D)}s \restrictto \xi)$. Thus, $\M(A_{\phi(D)}s \restrictto \xi) = 1$ for all $\xi \geq \omega^{\phi(D)+1}$, which implies that $\phi(A_{\phi(D)}) \leq \omega ^{\phi(D)+1}$. This contradicts our choice of $(A_\alpha : \alpha \in \kappa)$.
\end{proof}

\begin{bibdiv}
\begin{biblist}

\bib{Enderton}{book}{
author={H. Enderton},
title={Elements of Set Theory},
publisher={Academic Press},
date={1977}
}

\bib{Jech}{book}{
author={T. Jech},
title={Set Theory},
subtitle={The Third Millennium Edition},
publisher={Springer},
year={2006}
}

 \end{biblist}
\end{bibdiv}

\end{document}